\documentclass[12pt]{article}
\usepackage{amssymb,dsfont}
\usepackage{amsmath,amsthm}

\usepackage{rotating}

\usepackage{makeidx}
\makeindex

\newcommand{\R}{\mathbb{R}}
\newcommand{\E}{\mathbb{E}}
\renewcommand{\P}{\mathbb{P}}
\newcommand{\N}{\mathbb{N}}

\newcommand{\Z}{\mathbb{Z}}

\newcommand{\LO}{\mathcal{L}}
\newcommand{\D}{\mathcal{D}}
\newcommand{\vv}{{\vskip 1mm \noindent}}




\newdimen\trimheight\trimheight9.71truein   
\newdimen\trimwidth \trimwidth6.48truein     
\newdimen\typeheight\typeheight8in   
\newdimen\typewidth \typewidth5in
\newdimen\draftrule \draftrule=0pt
\newdimen\tempdimen
\newdimen\tablewidth
\newdimen\normaltextheight
\newbox\tempbox
\newdimen\tablewd 

\setlength{\textheight}{9.34 in}
\voffset= - 0.8 cm
\setlength{\topmargin}{- 0.4 in}
\setlength{\oddsidemargin}{40pt}
\setlength{\evensidemargin}{\oddsidemargin}
\flushbottom

\setlength{\textwidth}{5.75 in}

\usepackage{color}
\numberwithin{equation}{section}
\newtheorem*{theorem*}{Theorem}
\newtheorem*{corollary*}{Corollary}
\newtheorem{theorem}{Theorem}[section]
\newtheorem{lemma}[theorem]{Lemma}
\newtheorem{proposition}[theorem]{Proposition}
\newtheorem{corollary}[theorem]{Corollary}
\theoremstyle{definition}
\newtheorem{definition}[theorem]{Definition}

\newtheorem{remark}[theorem]{Remark}

\date{}

\begin{document}

\title { \bf{
On the Cauchy problem for non-local Ornstein--Uhlenbeck operators
}
}

\author{ \bf{E. Priola} \\
{Dipartimento di Matematica} \\ Universit\`a degli Studi di Torino
\\ enrico.priola@unito.it
\\ \\
\bf{S. Trac\`a} \\
 Operations Research Center\\ Massachusetts Institute of Technology
\\stet@mit.edu}

 \maketitle
%
\noindent {\bf Abstract:} We   study
 the Cauchy problem involving non-local 
Ornstein-Uhlenbeck operators in finite and infinite dimensions.
We  prove  classical solvability  
 without requiring 
that the L\'evy   measure 
corresponding to the large jumps part
has a first finite moment.
%
%
Moreover, we determine    
 a core  of regular functions which is invariant 
for the
associated 
transition Markov  semigroup.
Such a core 
allows to characterize the marginal laws 
of the Ornstein-Uhlenbeck 
stochastic process  as unique
solutions to Fokker-Planck-Kolmogorov  equations for measures.


\bigskip
\noindent {\bf Keywords:} Ornstein--Uhlenbeck non-local operators; Cauchy problem; L\'evy 
processes;  core for Markov semigroups. 

\bigskip \noindent 
 {\bf Mathematics Subject Classification:} 
35K15; 60H10; 
 60J75;
 47D07. 

\section{Introduction and notation
}
In this paper we investigate  solvability of the Cauchy problem involving  non-local Ornstein-Uhlenbeck operators both in   finite and infinite dimensions. We  also determine a core of regular functions which is invariant for the transition Ornstein-Uhlenbeck semigroup. 
 Differently with respect to  
 recent papers (see  \cite[Section 5]{Applebaumart1}, \cite[Section 4.1]{Kn} and \cite[Section 2]{W}) 
to study the core problem
we do not require that the associated L\'evy   measure $\nu$ 
 corresponding to the large jumps part
has a first finite moment (see \eqref{add1}). 


Let us first introduce the Ornstein-Uhlenbeck operator ${\cal L}_0$ in $\R^d$ and  its associated stochastic process.
The  operator ${\cal L}_0$ is defined 
as
\begin{align} \label{ouin}
{\cal L}_0 f(x)= \frac{1}{2}\sum_{j,k =1}^d Q_{jk} \, \partial_{x_j x_k}^2 f(x)
 +\sum_{j=1}^d a_j \partial_{x_j} f (x) +  \sum_{j,k =1}^d A_{jk} \, x_k  
\partial_{x_j } f(x) 
 \\   \nonumber    + \int_{\R^d} \Big( f(x+y) -f(x) - \mathds{1}_{\{ |y| \le 1\}}\, (y) \sum_{j=1}^d 
y_j\,{\partial_{x_j} f}(x) \Big)\nu(\mbox{d}y), \;\;\ x \in \R^d,
\end{align}
 where $\mathds{1}_{\{ |y| \le 1\}}$ is the indicator function of the closed ball with center $0$ and radius $1$,   $Q = (Q_{ij})$ and $A = (A_{ij})$ are given $d \times d$ real matrices ($Q$ being   symmetric and non-negative definite). Moreover $a = (a_1, \ldots, a_d) \in \R^d$ and 
$\nu$ is a  L\'evy jump measure, i.e., $\nu$ is a $\sigma$-finite Borel measure on $\R^d$ such that  
\begin{equation} \label{nuu}
\text{$\nu (\{0\})=0$ and} \;\; \int_{\R^d} (1 \wedge |y|^2) \, \nu(dy) < \infty
\end{equation}
($a \wedge b$ indicates the minimum between $a$ and $b \in \R$).
The function $f: \R^d \to \R$ belongs to $C^2_b(\R^d)$ (i.e., $f$ is  bounded and continuous together with its first and second partial derivatives) and the integral in \eqref{ouin} is well defined thanks to the Taylor formula.
 The associated Ornstein-Uhlenbeck
process (OU process)  solves the 
following   SDE
driven by a L\'evy process $Z$:
\begin{equation}\label{ou}
  \left\{ \begin{array}{ll}
         \mbox{d}X_t =AX_t\mbox{d}t+ \mbox{d}Z_t, & t \geq0 \\
         X_0=x  ,  & x\in\R^d
                \end{array}\right.
\end{equation}
(see, for instance, \cite{SY}, 
\cite{SWYY} and \cite{Masuda}).
The matrix $A$ is the same as  in \eqref{ouin} and $Z= (Z_t)_{t \geq 0}$ $= (Z_t)$ is a $d$-dimensional  L\'evy
 process uniquely  determined in law by the previous  $Q$, $a$ and $\nu$   (cf. \cite[Section 9]{Sato}).
Ornstein-Uhlenbeck processes  with jumps $X = (X_t)$ 
$= (X_t^x)$ 
have several applications to Mathematical Finance and Physics
(see for instance, \cite{BNS}, \cite{CT} and \cite{GO00}).
The corresponding transition Markov semigroup $(P_t) = $ $(P_t)_{t\geq 0}$ is called the Ornstein-Uhlenbeck semigroup (or Mehler semigroup):
\begin{equation}
\label{ou38}
 P_t f(x) = \E [f(X_t^x)], \;\;\; t \ge 0,
\end{equation} 
for any $f: \R^d \to \R$ which is Borel and bounded (see also \eqref{for1}).
In Section 3.1  we prove 
well-posedness of the  Cauchy problem
\begin{equation}
\label{ca4}
\begin{cases} \partial_t u (t,x) = \LO_0 u(t, x)
\\
u(0,x) = f(x),\;\;\; x \in \R^d, \;\; f \in C^2_b(\R^d),
\end{cases}
\end{equation}
where $\LO_0 u(t, x) = (\LO_0u(t, \cdot))(x)$ (see Theorem \ref{PPCAUCHY}).
We show that there exists a  unique bounded classical  solution  given by $u(t,x)= P_t f (x)$, $t \ge 0, $ $x \in \R^d$.
 Our result is not covered by regularity results on singular  pseudodifferential operators (cf. \cite{Ko89}). Moreover, it can not be deduced by perturbation arguments
using known   results  for the Ornstein-Ulenbeck semigroup (see, in particular, \cite{SY} and \cite{Masuda}). 
To prove solvability of \eqref{ca4}  we first establish the crucial formula 
\begin{equation}\label{Peq1}
P_t f(x)= f(x) + \int_0^t \LO_0 (P_s f ) (x) \mbox{d}s, \,\,\,\,t\geq0, 
x\in\R^d,\;\; f \in C^2_b(\R^d).
\end{equation}
(see Theorem \ref{PCAUCHY}). 
  Note that in 
\cite[Theorem 
3.1]{SY} it is  proved that 
\begin{equation}\label{ya1}
P_t f(x)= f(x) + \int_0^t P_s (\LO_0 f ) (x) \mbox{d}s, \,\,\,\,t\geq0, 
x\in\R^d , \;\; f \in C^2_K(\R^d).
\end{equation}
(we write $f \in C^2_K(\R^d) $ if $f \in C^2_b(\R^d)$ and $f$ has compact support).
Even   assuming  $f \in C^2_K(\R^d)$, formula \eqref{Peq1} can not be obtained directly from 
\eqref{ya1}
since 
the space $C_K^2(\R^d)$ is not 
invariant for the OU  semigroup $(P_t)$ (cf. Remark \ref{remimp}).
On the other hand \eqref{ya1} does not hold in general for $f \in C^2_b(\R^d)$
since $\LO_0 f$ can  grow linearly and so  
$P_t (\LO_0 f ) (x)$ 
could be  not well-defined 
without requiring the additional assumption (cf.  \eqref{uno1})
\begin{equation}
\label{add1}
\int_{ \{ |y| >1\}} |y| \nu (dy) < \infty.
\end{equation}
It is well-known that the OU semigroup $(P_t)$ is a $C_0$-semigroup (or strongly continuous semigroup) of 
contractions on $C_0(\R^d)$ (the Banach space of all real continuous functions on $\R^d$ which vanish at infinity, endowed with the supremum norm); see \cite{SY}, \cite{Masuda} and \cite{Tr} for a more direct proof. Let us denote by 
${\cal L}$ its generator.
Using \eqref{Peq1} in Theorem \ref{cor1} we  
show  that 
 \begin{equation} \label{doo}
 \D_0 = \Big \{ f \in C_0^2 (\R^d)  \; \; 
\text{such that } \;\;
\sum_{j,k =1}^d A_{jk} \, x_k \, 
\partial_{x_j } f \in
C_0(\R^d)  \Big\}
 \end{equation}
is {\it  invariant}  for the OU semigroup  ($f \in C^2_0(\R^d)$ if $f \in C^2_b(\R^d)$ and $f$ and its first and second partial derivatives belong to $C_0(\R^d))$.  
Note that 
this property 
implies that  the mapping:  $x \mapsto Ax \cdot DP_tf(x)$ is bounded on $\R^d$  when $f \in \D_0$ without assuming 
\eqref{add1}.
It turns out that $\D_0$  is 
also a core for ${\cal L}$  and ${\cal L} f= {\cal L}_0 f$, $f \in \D_0$.
Clearly, 
if  $f\in \D_0$ then  both \eqref{Peq1} and  \eqref{ya1} hold
 (see Corollary \ref{co1}).  

Starting from Section 4, we extend  the main results of Section 3 
to infinite dimensions,
replacing $\R^d$ with a given
real separable  Hilbert space ${H}$. Infinite dimensional Ornstein-Uhlenbeck processes  are solutions of linear 
stochastic evolution equations and  are formally  similar to  \eqref{ou}; we   assume  that  $A$ is the generator of a $C_0$-semigroup on $H$ and $Z $ is an $H$-valued L\'evy processes. Such processes 
allow to solve basic linear SPDEs 
(cf. \cite{DZ}, \cite{D}, \cite{PZ} and the references therein).
 Ornstein-Uhlenbeck processes with jumps in infinite dimensions were first 
studied in \cite{Ch}. A more  general approach to such 
processes 
using generalised Mehler semigroups has been initiated in 
\cite{BRS} (see also  \cite{FR}, \cite{LR}, \cite{PZ1}, \cite{Kn}, \cite{W} and Remark \ref{mehler}). 

 In Theorem \ref{PCAUCHYi} we 
extend formula
\eqref{Peq1} to  infinite dimensions when $f \in C^2_b(H)$ and $x \in D(A)$ (i.e., $x$ belongs to the domain of $A$). We use such formula to  show existence and uniqueness of solutions for  an infinite-dimensional Cauchy problem like \eqref{ca4} when $\R^d$ is replaced by $H$ (see Theorem \ref{PPCAUCHY2}); we assume that the  initial datum $f$ belongs to  $C^2_b(H)$ and  that a compatibility condition between $Df(x) $ and $A$ is satisfied (see the definition of the space $C^2_A(H)$ in \eqref{ca1}). 
This result of well-posedness seems to be new even for local infinite dimensional 
OU operators corresponding to the case when $Z$ is a Wiener process.

To study an infinite dimensional OU semigroup $(P_t)$
it is natural to consider it as acting in  $C_b(H)$ or $UC_b(H)$  which are both invariant for the semigroup. Here 
$C_b(H)$ 
(resp. $UC_b(H)$) consists  of all real bounded and continuous (resp.  uniformly continuous) functions on $H$.
Indeed  $C_0(H)$ which generalizes $C_0(\R^d)$ is invariant for $(P_t)$ only under  quite restrictive assumptions (see 
 page 91 of \cite{Applebaumart1}). On the other hand, it is well known that $(P_t) $ is  not strongly continuous neither on  $C_b(H)$ nor in  $UC_b(H)$ if we consider the sup-norm topology (see \cite{Ce}, \cite{CG}, \cite{P}, \cite{GK}, \cite{Ku} where possible  approaches to study  Markov semigroups in $C_b(H)$  or $UC_b(H)$ are proposed). 
In $C_b(H)$ one can define an infinitesimal generator ${\cal L}$  in a pointwise sense (see \eqref{gene}) or in other equivalent ways (see Remark \ref{d33}). This generator coincides with the one investigated in \cite{Applebaumart1}.

 In Section 5.2  we determine two natural   pointwise cores  ($\pi$-cores) ${\tilde \D_0}$  and $\D_1$ for $\cal L$. They are both invariant for  $(P_t)$ and further the restriction of $\cal L$ to ${\tilde \D_0}$  and $\D_1$
 coincides with ${\cal L}_0$. The $\pi-$core $\tilde \D_0$ is a kind of infinite-dimensional generalization of $\D_0$ given in \eqref{doo}. 
 On the other hand $\D_1$ is  similar to the space introduced in  \cite{Ma} when the L\'evy noise is a Wiener process. The definition of $\D_1$ is a bit involved but this space can also be used  to study  generalised Mehler semigroups (cf. Remark \ref{mehler}). We discuss in Remark \ref{app} another possible core 
used in \cite{Applebaumart1} under the assumption
$\int_{ \{ |x| >1\}} |x| \nu (dx) < \infty$ (see also \cite{GK}).

We mention that  cores of regular bounded functions are also useful  to investigate the Ornstein-Uhlenbeck semigroups in $L^p(\mu)$ with respect to an invariant measure $\mu$ assuming that such a measure exists  (see \cite{Ch}, \cite{LR}, \cite{Kn} and the references therein). 
Moreover, such cores allow to study 
Fokker-Planck-Kolmogorov equations for measures (see \cite{BDR04}, \cite{Ma}, \cite{BDR}, \cite{Kn}, \cite{W} and the references therein). On this respect, in Section 6, following \cite{Ma}, we  
show that both $\tilde \D_0$ and $\D_1$  can be used 
to  prove that the marginal laws of Ornstein-Uhlenbeck processes are the  unique solutions to 
Fokker-Planck-Kolmogorov equations for measures $(\gamma_t)$ , i.e.,
\begin{equation*}
\begin{cases}
\frac{d}{dt} \int_H f(x) \gamma_t(dx) = 
\int_H {\cal L}_0 f(x) \gamma_t(dx), \;\;\; f \in {\D}, \; t \ge 0,
\\
 \gamma_0 = \delta_x,
\end{cases}  
\end{equation*}
$x \in H$; here the space $\D$ can be $\tilde \D_0$ or $\D_1$ (see Theorem \ref{cio}).

\medskip
\noindent {\bf Notation.} In $\R^d$ 
by $\langle \cdot, \cdot \rangle$  and $|\cdot |$ we indicate 
the usual inner
product and the Euclidean norm, $d \ge 1$. By
$B_b(\R^d, \R^k)$, $d,k \ge 1$, we denote  the Banach space of  all Borel and 
bounded functions $f$ 
from $\R^d$ into $\R^k$  endowed with norm $\Vert f \Vert_{0}=
\sup_{x\in\R^d}\vert f(x) \vert$. When $\R^k = \R$ we set $B_b(\R^d, \R)
= B_b(\R^d)$; this convention will be used for other function spaces as well. $C_b(\R^d, \R^k)$ is the closed subspace of $B_b(\R^d, \R^k)$ of all  
bounded and continuous functions.  Moreover, $UC_b(\R^d, \R^k)$ is the closed 
subspace of all bounded and  uniformly  continuous functions. 
We say  that $f \in C_0(\R^d, \R^k)$ if $f \in C_b (\R^d, \R^k)$ and 
$f$ vanishes at infinity (i.e., for any $\epsilon >0$ there exists a bounded 
set $A \subset \R^d$ such that $|f(x)| < \epsilon$ if $x \in \R^d \setminus A$).

For each integer $n \ge 1,$   we say that
 $g\in C_{b}^{n}({\mathbb{R}}^{d})$ if $ g \in C_b(\R^d)$ and $g$ is $n$-times 
 (Fr\'echet) differentiable on $\R^d$ with 
all the (Fr\'echet) derivatives $D^{j}g$ which are continuous and 
bounded on $\R^d$, $j =1, \ldots, n$.
We use $C^2_0(\R^d)$ to denote the space of functions $f\in C_b^2(\R^d) \cap 
 C_0(\R^d)$ 
such that  $D f$ and $D^2 f$ 
vanish at infinity. Finally, $C^2_K (\R^d) $ denotes the space of functions $f 
\in C^2_0(\R^d)$ with compact support.

Given a 
real separable Hilbert space $H$ and  a linear bounded operator
$T : H \to H$, we denote by $\| T\|_{L}$ its operator norm. If in addition  $T$  is 
 an Hilbert-Schmidt operator then 
$$
 \| T \|_{HS} = \Big(\sum_{k \ge 1} |T e_k|^2 \Big)^{1/2}  
$$
denotes its Hilbert-Schmidt norm (here
$(e_k)$ is an orthonormal basis in $H$ and $\langle \cdot , \cdot \rangle$, $|\cdot |$ denote the inner product and the  
norm in $H$). 
We indicate by $L_2(H)$ the  space consisting of all 
   Hilbert-Schmidt operators from $H$ into $H$; it is 
a real separable Hilbert 
space endowed  with the inner product: $T \cdot S = \text{Tr} (T^* S )$ 
$=\sum_{k \ge 1} \langle T e_k, Se_k \rangle$, $S, T \in L_2(H)$
 (we refer to  
 Appendix C in \cite{DZ} 
for more details). 

The previous function spaces can be easily generalised  when $\R^d$ is 
replaced by $H$, i.e.,  we can consider  the spaces 
$B_b(H)$, 
$C_b(H)$, $UC_b(H)$, $C_0 (H)$, $C^k_b(H)$, $k = 1,2$. In particular,
$f \in C^1_b (H)$ if $f: H \to \R$ is Fr\'echet differentiable in $H$ and $f: H \to \R$, $Df : H \to H$ are bounded and continuous (we write $Df \in C_b(H,H)$). Moreover, $f \in C^2_b (H)$ if $f \in C^1_b(H)$, $f: H \to \R$ is twice Fr\'echet differentiable in $H$ with  $D^2f(x) \in L_2(H)$, $x \in H$, and $D^2f : H \to L_2(H)$ is  bounded and continuous.



\section{Preliminaries}
\label{Basic definitions and results}

Here we review basic facts on L\'evy processes and introduce  the 
Ornstein-Uhlenbeck process with values in
$\R^d$. We refer to \cite{Sato}, \cite{Applebaum} and \cite{SY} for more 
details.

We fix a
  stochastic basis
  $(\Omega, {\cal F}, ({\cal F}_t)_{t \ge 0}, \P)$ which satisfies
  the usual assumptions (see, for instance,
  page 72 in  \cite{Applebaum}). An
  $({\cal F}_t)$-adapted
  $d$-dimensional stochastic process $Z=(Z_t)$ $= (Z_t)_{t \ge 0}$,  $d \ge 1$, 
is a  {\it L\'evy  process}
  if it is  continuous in probability,
  it  has  stationary increments,
     c\`adl\`ag  trajectories, $Z_t - Z_s$ is independent of
     ${\cal F}_s$, $0 \le s \le t$,
      and $Z_0=0$.  
Recall that there exists a unique $\psi : \R^d \to {\mathbb C}$ such that 
 $$
 \E [e^{i \langle u, Z_t\rangle}] = e^{- t
\psi(u)},\, u \in \R^d, \; t \ge 0;
$$
$\psi$ is called the {\it exponent} (or symbol)
 of $Z $ ($\E$ denotes expectation with respect to $\P$) . The \emph{L\'evy-Khintchine representation} for $\psi$  is 
\begin{equation} \label{ft1}
 \psi(u)= \frac{1}{2}\langle Q u,u  \rangle   - i \langle a, u\rangle
- \int_{\R^d} \Big(  e^{i \langle u,y \rangle }  - 1 - \, { i \langle u,y
\rangle} \, \mathds{1}_{\{ |y| \le 1\}} \, (y) \Big ) \nu (dy), 
\end{equation}
 $u \in \R^d,$ where 
   $Q$ is a symmetric $d \times d$ non-negative definite matrix, $a
\in
 \R^d$ (if $B \subset \R^d$, $\mathds{1}_B (x) =1$ if $x \in B$ and $\mathds{1}_B (x) =0$ if $x \not \in B$). Moreover,
  $\nu$
 is the {\it L\'evy (jump) measure} (or intensity measure) of $Z.$ Thus,  
$\nu$ is a $\sigma$-finite (Borel)
measure on $\R^d$, such that  \eqref{nuu} holds.
Note that 
\begin{equation}
\label{uno1}
\E [ |Z_t| ]< \infty, \;\;\; t \ge 0, \;\; \text{if and only if } \;\;
\int_{ \{ |y| >1\}} |y| \nu (dy) < \infty
\end{equation} 
 (see  \cite[Theorem 2.5.2]{Applebaum}).
The 
  \emph{ Ornstein-Uhlenbeck process} which solves \eqref{ou} is given by 
\begin{equation}\label{2}
 X^x_t  =  e^{tA} x + \int_0^t e^{(t-s)A}  dZ_s = e^{tA} x + Y_t,
  \;\; t \ge 0, \; x \in \R^d,
\end{equation}
where $e^{tA} = \sum_{k=0}^{\infty}\frac{t^k A^k}{k!} $  and the stochastic 
convolution $Y_t$ can be defined as a limit
in probability of suitable Riemann  sums (cf. page 104 in \cite{Sato} and \cite{Ch}). Let us denote by $\mu_t$ the law of 
$Y_t$.
The {law} $\mu_t^x $ of $X_t^x $ has
 {characteristic function} (or Fourier transform) $\hat {\mu_t^x}$ given by
\begin{equation}\label{c}
\hat {\mu_t^x} (h) 
= \E [e^{i \langle  X_t^x, h\rangle}]
=
e^{i \langle e^{tA}  x, h\rangle} \hat \mu_t (h) =
e^{i \langle e^{tA^*}  h, x\rangle}
 \,  \exp { \Big(- \int_0 ^t \psi (  e^{s A^*}  h) ds \Big)}, 
\end{equation}
 $h \in
 \R^d,$
where $A^*$ denotes the adjoint matrix of $A$ and $\psi$ is the exponent 
 of  $Z$ (cf.   \cite[Proposition 2.1]{Masuda}; clearly, $\mu_t = \mu_t^0$). Next we recall basic facts about OU semigroups $(P_t)$.
We define for $f \in B_b(\R^d),$ $x \in \R^d,$ $t \ge 0$
\begin{equation} \label{for1}
P_t f(x) = (P_t f)(x) = \E [f(X_t^x)]= \int_{\R^d} f(e^{tA} x + y) \mu_t(dy).
\end{equation} 
An important property is that, for any $f \in C_b(\R^d)$,  the mapping: 
\begin{equation} \label{con2}
(t,x) \mapsto P_tf(x) \; \text{ is continuous on } \;
 [0, +\infty) \times \R^d \; \;\text {and }
\end{equation} 
\begin{equation} \label{con3}
 \lim_{t \to 0^+ } P_t f = f, \;\;\; \text{uniformly on compact sets of $\R^d$}
\end{equation}
 (we refer to   \cite[Lemma 2.1]{BRS} which contains  a more general result;
see also    
 \cite[Section 4]{FR} and \cite[Theorem 4.1]{Applebaumart1}). Note that \eqref{con2} 
implies \eqref{con3}.

The spaces $B_b(\R^d)$, $C_b(\R^d)$, $UC_b(\R^d)$ and $C_0(\R^d)$ are all 
invariant for the 
OU semigroup (for instance, $P_t (C_b(\R^d)) \subset C_b(\R^d)$, $t \ge0$).
Moreover,  $(P_t)$ is a $C_0$-semigroup of 
contractions on $C_0(\R^d)$ (see \cite{SY}, \cite{Masuda} and \cite{Tr}), i.e.,
\begin{equation}
\label{strong1}
\lim_{t \to 0^+ } \| P_t f -f\|_{0} =0, \;\;\; f \in C_0(\R^d). 
\end{equation}


\section{ Ornstein-Uhlenbeck operators in $\R^d$  }

\subsection{Classical solvability of the Cauchy problem}

We show   well-posedness  of the Cauchy 
problem \eqref{ca4} involving  
${\cal L}_0$. We write
\begin{gather}
\nonumber {\cal L}_0 f(x) =    \langle A x ,  Df(x) \rangle  + {\cal L}_1 
f(x), \;\; \text{where} 
\\ \label{vi2}
  {\cal L}_1 f(x)   =  
\int_{\R^d}\left(  f(x+y) - f(x) 
-\mathds{1}_{\{\vert y \vert \leq 1\}} 
\langle y , Df(x) \rangle\right)\nu(\mbox{d}y)
\\ \nonumber
+  \frac{1}{2}\mbox{Tr}(QD^2f(x)) + \langle a, Df(x)\rangle, \;\;\; f \in C^2_b 
(\R^d),\; x \in \R^d.
\end{gather}
A {\it bounded classical solution $u$ } to the Cauchy problem \eqref{ca4}
 is a bounded and continuous real function defined on $E = [0,+\infty) \times \R^d$,  such that

\vv (i)
there exist classical partial derivatives 
  $\partial_{x_i} u$ and 
 $\partial_{x_i x_j}^2 u$, $i, j =1, \ldots, d$,
 which are bounded and continuous on $E$;

\vv (ii) $u(\cdot ,x)$ is a $C^1$-function on $[0,+\infty)$, $x \in \R^d,$ and $u$ solves \eqref{ca4}.


\medskip
Existence of classical solutions is based on the following result.
\begin{theorem}\label{PCAUCHY}
 Let $f\in C_b^2(\R^d)$. Then, for any $x \in \R^d$, the mapping: $t \mapsto 
\LO_0 (P_t f ) (x)$ is continuous on $[0, + \infty)$ and $\lim_{t \to 0^+}
  \LO_0 (P_t f ) (x) = \LO_0 f (x)$. Moreover,
\begin{equation}\label{Peq5}
P_t f(x)= f(x) + \int_0^t \LO_0 (P_s f ) (x) \mbox{d}s, \,\,\,\,t\geq0, 
x\in\R^d.
\end{equation}
\end{theorem}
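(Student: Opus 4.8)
The plan is to establish the integral identity \eqref{Peq5} by working on the Fourier side, where the Ornstein--Uhlenbeck semigroup acts in the transparent form recorded in \eqref{c}, and then to transfer the resulting smoothness and the fundamental-theorem-of-calculus relation back to $P_t f$ itself. The key observation is that the semigroup property together with \eqref{c} gives, for fixed $t$, the representation $P_t f(x) = \int_{\R^d} f(e^{tA}x + y)\,\mu_t(dy)$, and that differentiating this under the integral sign produces the action of $\LO_0$ on $P_t f$. Concretely, I would first show that $P_t f \in C^2_b(\R^d)$ whenever $f \in C^2_b(\R^d)$, with $D(P_tf)(x) = e^{tA^*}\int_{\R^d} Df(e^{tA}x+y)\,\mu_t(dy)$ and a corresponding formula for $D^2(P_tf)$; this legitimizes writing $\LO_0(P_tf)(x)$ in the first place and is where the uniform bounds on $Df, D^2f$ get used to justify differentiation under the integral via dominated convergence.

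The central computation is to verify the ``forward'' Kolmogorov structure: I would fix $x$ and $t$, and for small $h>0$ analyze
\begin{equation*}
\frac{P_{t+h}f(x) - P_t f(x)}{h}
= \frac{1}{h}\Big( P_h(P_t f)(x) - P_t f(x) \Big),
\end{equation*}
using the semigroup property $P_{t+h} = P_h P_t$. Setting $g = P_t f \in C^2_b(\R^d)$, the problem reduces to showing that $\tfrac{1}{h}(P_h g(x) - g(x)) \to \LO_0 g(x)$ as $h \to 0^+$, i.e. that the pointwise generator of $(P_t)$ on $C^2_b$ functions is $\LO_0$. This I would obtain directly from the Lévy--Khintchine structure: writing $P_h g(x) - g(x) = \E[g(X_h^x)] - g(x)$ and applying Itô's formula for the Lévy-driven SDE \eqref{ou} (or equivalently expanding $\E[g(e^{hA}x + Y_h)] - g(x)$ and using the characteristic function \eqref{c} to identify the three contributions: the drift $\langle Ax + a, Dg(x)\rangle$, the Gaussian part $\tfrac12\mathrm{Tr}(QD^2g(x))$, and the jump integral against $\nu$). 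The Taylor remainder on the compensated small-jump part is controlled by $\int_{\R^d}(1\wedge|y|^2)\,\nu(dy) < \infty$ from \eqref{nuu}, and crucially the large-jump part needs no first-moment assumption because $g$ and its derivatives are \emph{bounded}, so $\int_{\{|y|>1\}}|g(x+y)-g(x)|\,\nu(dy) \le 2\|g\|_0\,\nu(\{|y|>1\}) < \infty$. This is exactly the feature that lets us avoid \eqref{add1}.

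Having identified the right derivative $\partial_t^+ P_t f(x) = \LO_0(P_tf)(x)$, I would then prove continuity of $t \mapsto \LO_0(P_tf)(x)$ on $[0,\infty)$ together with the limit $\LO_0(P_tf)(x) \to \LO_0 f(x)$ as $t\to 0^+$; the latter uses \eqref{con3} applied to $f$, $Df$, $D^2f$ (all in $C_b$) plus dominated convergence in the jump integral, with the bounded-derivative domination again absorbing the large-jump region. Continuity of the right derivative on $[0,\infty)$ promotes it to genuine differentiability, and integrating from $0$ to $t$ yields \eqref{Peq5}. \textbf{The main obstacle} I anticipate is the interchange of $\LO_0$ with the limit defining $\partial_t$, i.e. justifying that the nonlocal integral operator $\LO_1$ commutes with the $h\to 0^+$ limit uniformly enough to conclude; the delicate point is the small-jump region, where one must bound the difference quotient of the compensated increment by a $\nu$-integrable function independent of $h$, exploiting the second-order Taylor estimate $|g(x+y)-g(x)-\langle y, Dg(x)\rangle| \le \tfrac12 \|D^2 g\|_0 |y|^2$ on $\{|y|\le 1\}$ and the crude $2\|g\|_0$ bound on $\{|y|>1\}$. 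Making this domination uniform in $h$, so that dominated convergence applies, is the technical heart of the argument.
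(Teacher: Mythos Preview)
Your strategy---reduce to the pointwise generator identity $\lim_{h\to 0^+}h^{-1}(P_h g(x)-g(x))=\LO_0 g(x)$ for $g=P_tf\in C^2_b(\R^d)$ and then integrate---is different from the paper's, which first verifies \eqref{Peq5} for exponentials $x\mapsto e^{i\langle x,h\rangle}$ by direct differentiation of the explicit formula \eqref{c} and then extends to all of $C^2_b(\R^d)$ via the approximation Lemma~\ref{Plemma}. Your plan is natural, but it has a genuine gap, and the obstacle is not where you place it.

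You flag the nonlocal part $\LO_1$ as the main difficulty, but in fact $\LO_1 g\in C_b(\R^d)$ whenever $g\in C^2_b(\R^d)$: the very bounds you write down already give $\|\LO_1 g\|_0<\infty$, so that piece causes no trouble at all. The real obstruction is the linear drift $\langle Ax,Dg(x)\rangle$, which is \emph{unbounded} in $x$. After It\^o's formula and taking expectations (the martingale terms do vanish since $g$ and $Dg$ are bounded) one gets $P_hg(x)-g(x)=\E\int_0^h\LO_0 g(X_s^x)\,ds$, and to pass to the limit $h\to 0^+$ one must dominate $h^{-1}\int_0^h\langle AX_s^x,Dg(X_s^x)\rangle\,ds$. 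The natural majorant is $C\sup_{s\le 1}|X_s^x|$, whose integrability amounts to $\E|Z_1|<\infty$, i.e.\ to condition \eqref{add1}---precisely the hypothesis the theorem is designed to avoid. Your alternative of ``using the characteristic function \eqref{c}'' for a \emph{general} $g\in C^2_b$ would require writing $g$ as a superposition of exponentials, hence an approximation step, which is exactly the paper's route. The reason that route succeeds is that for exponentials $\partial_t(P_tf)(x)$ is computed \emph{exactly} from \eqref{c} with no integrability demand on $X_s^x$; Lemma~\ref{Plemma} then lets one pass to the limit in the already-integrated identity, where the drift enters only as the finite quantity $\langle e^{sA}Ax,P_s(Df_{nm})(x)\rangle$ for each fixed $x$ (cf.\ \eqref{cd4}).
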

In order to prove this result  we introduce the
    linear span $V(\R^d)$ of 
the real and imaginary parts
of the functions
 $x \mapsto e^{i \langle x, h \rangle}$, $h \in \R^d$.

We need an  approximation result with functions in $V(\R^d)$.   This is similar to \cite[Proposition 2.67]{D} and  \cite[Proposition 4.2]{Ma} 
(the main difference is that  these results do not consider approximations of second derivatives). A detailed proof is given in \cite{Tr}. We give a sketch of proof in Appendix.  

\begin{lemma}\label{Plemma}
Let $f\in C^2_b(\R^d)$. There exist a double sequence  
$(f_{nm})_{n,m\in\N} \subset V(\R^d)$ and a sequence $(f_n)_{n\in\N} \subset  
C^2_{b}(\R^d)$ such that, for any $n \ge 1,$  
$$ \lim_{m\rightarrow\infty}  \| f_{nm} - f_n\|_{0} =0, \;\;\;
 \lim_{m\rightarrow\infty}  \| Df_{nm} - Df_n\|_{0} =0,
\;\; \lim_{m\rightarrow\infty}  \| D^2f_{nm} - D^2f_n\|_{0} =0.$$
Moreover, there exists $M= M (f)>0$   such that, for any $n , m \ge 1,$
\begin{equation}\label{M}
 \| f_{nm} \|_{0} + \| D f_{nm} \|_{0} +
  \| D^2f_{nm} \|_{0}+
 \Vert f_{n} \Vert_{0} + \Vert Df_{n} \Vert_{0} + \Vert D^2 
f_{n}\Vert_{0} \leq M \end{equation}
and, for any $x \in \R^d,$
$$
\lim_{n \rightarrow \infty} f_{n}(x)= f(x),\;\; 
\lim_{n \rightarrow \infty} Df_{n}(x)= Df(x),
\;\; \lim_{n \rightarrow \infty} D^2f_{n}(x)= D^2f(x).
$$
\end{lemma}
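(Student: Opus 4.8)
The plan is to produce the two sequences in two stages: first a family $(f_n)$ of smooth \emph{periodic} functions converging to $f$ pointwise together with their first and second derivatives and uniformly bounded in $C^2_b(\R^d)$, and then, for each fixed $n$, a sequence of trigonometric polynomials $(f_{nm})_m \subset V(\R^d)$ approximating $f_n$ uniformly up to second derivatives. The reason $f_n$ must be taken periodic rather than compactly supported is essential: every nonzero element of $V(\R^d)$ is almost periodic and hence does not vanish at infinity, so a sequence in $V(\R^d)$ can converge uniformly on all of $\R^d$ only to an almost periodic limit. Taking $f_n$ periodic makes this compatible while still allowing the pointwise convergence $f_n \to f$.

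For the first stage I fix $\psi \in C^\infty_c(\R^d)$ with $0 \le \psi \le 1$, $\psi \equiv 1$ on $[-1,1]^d$ and $\mathrm{supp}\,\psi \subset [-2,2]^d$, set $g_n(x) = f(x)\,\psi(x/n)$, and let $f_n$ be the $L_n$-periodization of $g_n$ with $L_n = 6n$, that is $f_n(x) = \sum_{k \in \Z^d} g_n(x - L_n k)$. Since $\mathrm{supp}\,g_n \subset [-2n,2n]^d$ lies strictly inside the fundamental domain $[-3n,3n]^d$, the sum is locally finite, $f_n \in C^2(\R^d)$ is $L_n$-periodic, and $f_n = g_n$ on $[-3n,3n]^d$. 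The Leibniz rule together with $\|\psi(\cdot/n)\|_0 \le 1$, $\|D\psi(\cdot/n)\|_0 = \|D\psi\|_0/n$ and $\|D^2\psi(\cdot/n)\|_0 = \|D^2\psi\|_0/n^2$ gives a bound on $\|f_n\|_0 + \|Df_n\|_0 + \|D^2 f_n\|_0$ depending only on $f$ and $\psi$, not on $n$. For fixed $x$ and $n$ large enough one has $x \in [-n,n]^d$, where $\psi(\cdot/n) \equiv 1$ and $D\psi(\cdot/n) = D^2\psi(\cdot/n) = 0$; hence $f_n(x) = f(x)$, $Df_n(x) = Df(x)$ and $D^2 f_n(x) = D^2 f(x)$ for all large $n$, yielding the required pointwise convergence.

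For the second stage I fix $n$ and expand the periodic $C^2$ function $f_n$ on the torus $\R^d/L_n\Z^d$, whose Fourier modes $x \mapsto e^{i\langle x, 2\pi k/L_n\rangle}$, $k \in \Z^d$, have (after taking real and imaginary parts) frequencies in $\R^d$, so their finite linear combinations lie in $V(\R^d)$. I take $f_{nm} = f_n * F_m^{(d)}$, the convolution on the torus with the product Fej\'er kernel $F_m^{(d)}$, which is a trigonometric polynomial; thus $f_{nm} \in V(\R^d)$. Because $F_m^{(d)} \ge 0$ and $\int F_m^{(d)} = 1$, and since $D^\alpha(f_n * F_m^{(d)}) = (D^\alpha f_n) * F_m^{(d)}$ (the kernel being smooth and periodic), I obtain $\|D^\alpha f_{nm}\|_0 \le \|D^\alpha f_n\|_0$ for $|\alpha| \le 2$, which gives \eqref{M} with $M$ depending only on $f$. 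As $D^\alpha f_n$ is continuous and $L_n$-periodic for $|\alpha| \le 2$, Fej\'er's theorem on $\mathbb{T}^d$ gives $(D^\alpha f_n) * F_m^{(d)} \to D^\alpha f_n$ uniformly on the torus, equivalently on all of $\R^d$ by periodicity; this is exactly $\lim_m \|D^\alpha f_{nm} - D^\alpha f_n\|_0 = 0$.

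I expect the delicate point to be the simultaneous control of the second derivatives together with the uniform-in-$(n,m)$ bound \eqref{M}, which is precisely the feature absent from \cite[Proposition 2.67]{D} and \cite[Proposition 4.2]{Ma}. Both are handled by using a positive, $L^1$-normalized summability kernel (Fej\'er) rather than raw partial sums, so that convolution neither increases the sup-norm nor spoils the derivative convergence, and by arranging $f_n$ to be periodic so that uniform approximation on all of $\R^d$ by genuine elements of $V(\R^d)$ is possible at all.
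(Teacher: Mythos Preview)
Your proof is correct and follows the same overall architecture as the paper's argument in the Appendix: multiply by a cutoff to obtain a compactly supported function, extend periodically, and approximate by trigonometric polynomials. There are, however, two technical differences worth noting. First, the paper mollifies $f$ before cutting off, obtaining $f_n\in C^\infty$, and then takes \emph{raw} partial Fourier sums $f_{nm}=\sum_{|h|\le m}c_h^{(n)}e^{i\pi\langle x,h\rangle/(2n)}$; the smoothness of $f_n$ is what ensures uniform convergence of the series and its term-by-term derivatives. You instead skip the mollification and compensate by using Fej\'er means. Second, and relatedly, your choice of the Fej\'er kernel makes the uniform bound \eqref{M} immediate via $\|D^\alpha f_{nm}\|_0=\|(D^\alpha f_n)*F_m^{(d)}\|_0\le\|D^\alpha f_n\|_0$, whereas the paper's sketch does not spell out how the uniform-in-$(n,m)$ bound for the partial sums is obtained (it refers to \cite{Tr} for details). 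Your route is therefore slightly more self-contained on this point and avoids one approximation layer; the paper's route has the minor advantage that partial sums are perhaps more familiar, but requires the extra smoothness from mollification to control them.
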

\begin{proof} [Proof of Theorem \ref{PCAUCHY}]
The 
first 
assertion about the continuity of $t \mapsto  
\LO_0 (P_t f ) (x)$ follows easily using property \eqref{con2} together 
 with  the 
following identity
\begin{align} \label{dd2}
\LO_0 (P_t f ) (x) =& \langle e^{tA} A x , P_t Df (x) \rangle + \\
\nonumber +& \int_{\R^d}\left(  P_tf(x+y) - P_t f(x) -\mathds{1}_{\{\vert y 
\vert \leq 1\}} \langle e^{tA}y , P_t Df(x) \rangle\right)\nu(\mbox{d}y) \; +\\ 
\nonumber +&  \frac{1}{2}\mbox{Tr}(Qe^{tA^*}P_t D^2f (x)e^{tA}) + \langle 
e^{tA} a, P_t 
Df(x)\rangle
\end{align}
 (note that   $P_t Df(x) = \E [Df(X^x_t)]$ and $P_t D^2f(x) = \E 
[D^2f(X^x_t)]$).

We will  first prove \eqref{Peq5} for $f\in V(\R^d)$. Then  when $f \in 
C^2_b 
(\R^d)$ we will use an approximating argument which is based on the previous 
lemma. We split the proof into two parts.

\vv \text{\it I Step.}
We  show \eqref{Peq5} for $f\in V(\R^d)$. It suffices to check that \eqref{Peq5} holds when
$$f(x)= 
e^{i \langle h , x \rangle}, 
$$ where $h\in\R^d$.
When $x\in\R^d$ is fixed, we have  (see \eqref{c})
\begin{align*}
&\frac{\partial}{\partial t} (P_t f)(x) \bigg\vert_{t=0} 
=\frac{\partial}{\partial t} (\E[ e^{i\langle h , e^{tA}x + \int_0^t e^{(t-s)A} 
dZ_s \rangle}])\bigg\vert_{t=0}=\\
&=i \langle  h , e^{tA} Ax \rangle e^{i\langle h , e^{tA}x \rangle} 
e^{-\int_0^t 
\psi (e^{sA^*} h) ds } - e^{i \langle h ,  e^{tA} x \rangle } \psi ( e^{tA^*} h) 
 e^{-\int_0^t \psi ( e^{sA^*} h) ds  } \bigg\vert_{t=0} =\\
&=  \langle  D f(x) , A x \rangle - e^{i \langle h , x \rangle} \psi(h),
\end{align*}
where $\psi$ is the exponent of the L\'evy process $(Z_t)$.  
Using the L\'evy-Khintchine formula, we have that (cf. \eqref{vi2})
\begin{equation*}
- e^{i \langle h , x \rangle} \psi( h) = \LO_1(e^{i\langle h , \cdot 
\rangle})(x)=(\LO_1 f)(x),
\end{equation*}
and therefore,
\begin{equation}
\frac{\partial}{\partial t} (P_t f)(x) \bigg\vert_{t=0} = \LO_0 f(x).
\end{equation}
Similarly, recalling that $Y_t=\int_0^t e^{(t-s)A} \mbox{d}Z_s$, we can compute 
the derivative for $t>0$:
\begin{align} \label{new1}
&\frac{\partial}{\partial t} (P_t f)(x) \bigg\vert_{t>0} 
=\frac{\partial}{\partial t} (e^{i\langle h , e^{tA}x \rangle}\E[e^{i\langle h , 
Y_t \rangle}])\bigg\vert_{t>0}=\\ \nonumber
&=i \langle  h , e^{tA} A x \rangle e^{i\langle h , e^{tA}x 
\rangle} e^{\int_0^t 
\psi (e^{sA^*} h) ds } + e^{i \langle h ,  e^{tA} x \rangle } \psi (e^{tA^*} h)  
e^{\int_0^t \psi (e^{sA^*} h) ds},
\end{align}
using that $e^{i \langle h ,  e^{tA} x \rangle} = e^{i  \langle e^{tA^*} h ,   x 
\rangle }$, we find that
 $
\frac{\partial}{\partial t} (P_t f)(x) = \LO_0 (P_t f)(x),  $ $\,t\geq0.$ 
Integrating with respect to $t$, we get the assertion.

\vv \text{\it II Step.} We prove \eqref{Peq5} when  $f \in C^2_b (\R^d)$. We choose an approximating  sequence 
$(f_{nm})_{n,m\in\N} \subset V (\R^d)$ as  in Lemma \ref{Plemma}.
We can write
\begin{equation}\label{Peqfnm}
P_t f_{nm}(x)= f_{nm}(x) + \int_0^t \LO_0 (P_s f_{nm}) (x) \mbox{d}s, 
\,\,\,\,t\geq0, x\in\R^d,
\end{equation}
for any $n,m\in\N$.
In order to pass to the limit in \eqref{Peqfnm}
  we fix $T>0$, $x \in \R^d$, and study the convergence of $\LO_0(P_s f_{nm})(x)$, 
with $s \in [0,T]$.  The term
\[
\int_{\R^d} (P_sf_{nm}(x+y) - P_sf_{nm}(x) - \mathds{1}_{\{ |y|\le 1 \}}\langle 
y , DP_sf_{nm}(x)\rangle ) \nu (\mbox{d}y)
\]
can be written as  $I_{nm}(x) + J_{nm}(x)$, where
\begin{equation} \label{new2}
 I_{nm}(x) =\int_{\{\vert y \vert > 1 \}}  (P_sf_{nm}(x+y) - 
P_sf_{nm}(x)) 
\nu (\mbox{d}y) \;\;\text{and}
\end{equation}
$$
J_{nm}(x) = \int_{\{\vert y \vert \le 1 \}}  (P_sf_{nm}(x+y) - P_sf_{nm}(x) - 
\langle y , DP_sf_{nm}(x)\rangle) \nu (\mbox{d}y).
$$
Passing to the limit first as $m \to \infty$ and then as $n \to \infty$, 
we get 
\begin{equation}
\label{new3}
\lim_{n \to \infty } (\lim_{m \to \infty} I_{nm}(x)) = \int_{\{\vert y \vert 
> 1 \}}  (P_sf(x+y) - P_sf(x)) \nu (\mbox{d}y).
\end{equation}
(recall that $\nu$ is a 
finite measure on $\{ |y| > 1 \}$).
To deal with  $J_{nm}(x)$ we first observe that, using Taylor formula,
\[
\vert P_sf_{nm}(x+y) - P_sf_{nm}(x) - \langle y , DP_sf_{nm}(x)\rangle \vert 
\leq \vert y \vert^2 \Vert D^2 P_s f_{nm} \Vert_{0} \leq
\]
\[ \le C_T \vert y \vert^2 \Vert D^2  f_{nm} \Vert_{0}
\leq  C_T M \vert y\vert^2,\;\; y \in \R^d.
\]
Moreover, by  Lemma \ref{Plemma}, as $m\rightarrow \infty$ and 
$n\rightarrow \infty$,  we have that
\begin{equation}
\label{new5}
\langle y ,  DP_sf_{nm}(x)\rangle  = \langle e^{sA}y , P_s Df_{nm}(x) \rangle 
\rightarrow  \langle e^{sA}y , P_s Df(x)  \rangle =\langle y , DP_sf(x) \rangle,
\end{equation}
$y \in \R^d. $ Therefore, as $m\rightarrow \infty$ and $n\rightarrow \infty$,
 we find
\begin{align*}
\lim_{n \to \infty } (\lim_{m \to \infty} J_{nm}(x)) = \int_{\{\vert y \vert 
\le 1 \}}  \big(P_sf(x+y) - P_sf(x) - \langle y ,  DP_s f_{}(x)\rangle
 \big ) \nu (\mbox{d}y).
\end{align*}
Similarly, for the other terms of  $\LO_0(P_s f_{nm})$ we have that
\begin{align} \label{new6}
&\frac{1}{2}\mbox{Tr}(QD^2 P_sf_{nm}(x))= \frac{1}{2}\mbox{Tr}( Q
e^{sA^*}P_s D^2 f_{nm}(x)e^{sA}) \rightarrow\\
 \nonumber &\rightarrow\frac{1}{2}\mbox{Tr}( 
Qe^{sA^*}P_s D^2 f(x)e^{sA})= \frac{1}{2}\mbox{Tr}(QD^2 P_sf(x)),
\end{align}
 as $m\rightarrow \infty$ and 
$n\rightarrow \infty$, and
\begin{equation}
 \label{new7}
 \langle a , DP_s f_{nm}(x) \rangle =  \langle e^{sA}a , P_s Df_{nm}(x) 
\rangle \rightarrow \langle e^{sA}a , P_s Df(x) \rangle = \langle a , DP_s f(x) 
\rangle, \end{equation}
as $m\rightarrow \infty$ and $n\rightarrow \infty$, for
 $s\in[0,T]$.
We also observe that $\Vert \LO_1 (P_s f_{nm})\Vert_{0} \leq M_T$, for 
$s\in[0,T]$. It follows that
 \begin{equation*}
\int_0^t  \LO_1 (P_s f_{nm})(x) \mbox{d}s \rightarrow \int_0^t  \LO_1 (P_s 
f_{})(x) \mbox{d}s, \;\; t \in [0,T],
\end{equation*}
 as $m\rightarrow \infty$ and $n\rightarrow \infty$. Finally,
  for any $x \in \R^d$ and $s \in [0,T]$,
\begin{equation}
\label{cd4}
 \langle Ax ,  DP_s f_{nm}(x) \rangle = \langle 
e^{sA} Ax , P_s ( Df_{nm})(x) \rangle 
\rightarrow \langle e^{sA} A x , P_s (Df) (x) \rangle = \langle Ax ,  DP_s 
f(x) 
\rangle.
\end{equation}                                      
Therefore, for $t\in[0,T]$, as $m\rightarrow \infty$ and $n\rightarrow \infty$,
 $\int_0^t  \langle A x ,  DP_s f_{nm}(x) \rangle \mbox{d}s$ $ \rightarrow \int_0^t 
\langle A x ,  DP_s f(x) \rangle \mbox{d}s.$
In conclusion, passing to the limit as $m \to \infty$ and $n \to \infty$ in 
\eqref{Peqfnm}, we obtain \eqref{Peq5} and the proof is complete.
\end{proof}


\begin{theorem}\label{PPCAUCHY} Let $f\in C^2_b(\R^d)$.
If we set $u(t,x)=P_tf(x)$, $t \ge 0, $ $x \in \R^d$, where $(P_t)$ is given in \eqref{for1}, then $u$ is the 
unique bounded classical solution to the Cauchy problem
 \eqref{ca4}.
 \end{theorem}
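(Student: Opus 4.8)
The plan is to treat existence and uniqueness separately, existence being essentially a corollary of Theorem \ref{PCAUCHY} while uniqueness is the substantial point. For existence I would set $u(t,x)=P_tf(x)$ and verify (i)--(ii) directly. Boundedness of $u$ on $E$ follows from $\|P_tf\|_0\le\|f\|_0$, and joint continuity from \eqref{con2}. For the spatial regularity I would differentiate under the integral sign in \eqref{for1}: since $f\in C^2_b(\R^d)$, dominated convergence gives $DP_tf(x)=e^{tA^*}P_t(Df)(x)$ and $D^2P_tf(x)=e^{tA^*}(P_tD^2f)(x)e^{tA}$, and applying \eqref{con2} to the $C_b$ functions $Df$ and $D^2f$ shows that these are continuous on $E$ and bounded on each strip $[0,T]\times\R^d$. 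Finally, Theorem \ref{PCAUCHY} tells me that $s\mapsto\LO_0(P_sf)(x)$ is continuous and that \eqref{Peq5} holds; by the fundamental theorem of calculus $t\mapsto u(t,x)$ is then $C^1$ with $\partial_t u(t,x)=\LO_0(P_tf)(x)=\LO_0u(t,\cdot)(x)$, while $u(0,x)=f(x)$. Thus $u$ solves \eqref{ca4}.

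For uniqueness, the clean route --- and the one that, crucially, does not need the moment condition \eqref{add1} --- is probabilistic. Let $u$ be any bounded classical solution, fix $T>0$ and $x\in\R^d$, and put $M_t=u(T-t,X^x_t)$, $t\in[0,T]$, where $X^x$ solves \eqref{ou}. Since the generator of $X^x$ is exactly $\LO_0$, Itô's formula for the Lévy-driven SDE gives that $M_t-M_0-\int_0^t(\partial_s+\LO_0)u(T-s,\cdot)(X^x_{s^-})\,ds$ is a local martingale; but the integrand equals $(-\LO_0 u+\LO_0 u)(T-s,\cdot)(X^x_{s^-})=0$ because $u$ solves \eqref{ca4}, so $M$ is itself a local martingale. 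As $|M_t|\le\|u\|_0$, it is a genuine (bounded) martingale, whence $\E[M_0]=\E[M_T]$, i.e. $u(T,x)=\E[u(0,X^x_T)]=\E[f(X^x_T)]=P_Tf(x)$. Since $T$ and $x$ are arbitrary, $u=P_\cdot f$, giving uniqueness.

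The main obstacle is justifying the Itô/Dynkin computation in the absence of \eqref{add1}, and this is exactly where the boundedness built into the notion of classical solution pays off. Writing the jump part of $\LO_0$ with the truncation at $|y|\le1$, the small-jump contribution enters through a compensated Poisson integral whose integrand $u(T-s,X^x_{s^-}+y)-u(T-s,X^x_{s^-})$ is $O(|y|)$ by $\|Du\|_0<\infty$, hence square-integrable against $ds\,\nu(dy)$ on $\{|y|\le1\}$ by \eqref{nuu}; the large-jump part has finite activity since $\nu(\{|y|>1\})<\infty$, so it is a finite sum along paths whose compensator $\int_{\{|y|>1\}}(u(T-s,\cdot+y)-u(T-s,\cdot))\nu(dy)$ converges because $u$ is bounded. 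The only linearly growing term, $\langle AX^x_s,Du(T-s,X^x_s)\rangle$, appears solely inside the finite-variation (drift) integral, where it is cancelled by $\partial_s u=\LO_0 u$ and never has to be integrated against $\mu_t$ or $\nu$; this is precisely the step that would force \eqref{add1} in a purely analytic argument based on commuting $P_t$ with $\LO_0$ (compare \eqref{ya1}). An alternative, more analytic uniqueness proof via a maximum principle for $\LO_0$ on $[0,T]\times\R^d$ is conceivable, but the unbounded drift makes the construction of barriers delicate, so I would favour the martingale argument.
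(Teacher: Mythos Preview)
Your proposal is correct and follows essentially the same route as the paper. Existence is handled identically: differentiate under the integral sign in \eqref{for1}, invoke \eqref{con2} for joint continuity, and use Theorem \ref{PCAUCHY} to get the $C^1$-regularity in time together with the equation. For uniqueness the paper also applies It\^o's formula to $v(s,x)=u(t-s,x)$ along $X^x$ via the L\'evy--It\^o decomposition \eqref{levyito}, observes that the finite-variation part $\int_0^t(-\partial_s u+\LO_0 u)(t-r,X_r^x)\,dr$ vanishes, and then takes expectations directly; your phrasing as ``bounded local martingale $\Rightarrow$ true martingale'' is an equivalent packaging of the same computation, and your explicit discussion of why neither the compensated small-jump integral nor the large-jump compensator requires \eqref{add1} makes the paper's implicit step more transparent.
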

\begin{proof} \textit{Existence.} By \eqref{con2}   we know that
 $u(t,x)=P_tf(x)$
is bounded and continuous on $[0, +\infty) \times \R^d$.
Moreover, differentiating under the integral sign  it is easy to see 
that there exist classical partial derivatives 
 $\partial_{x_i} u$ and 
 $\partial_{x_i x_j}^2 u$ which are bounded and continuous 
 on $[0, \infty) \times \R^d$.

 We have also to verify that  $u(\cdot, x)$ is a $C^1$-function on 
$[0, +\infty)$, for any $x \in \R^d$.
This follows easily from \eqref{Peq5} and  \eqref{dd2} using 
again \eqref{con2}. Theorem \ref{PCAUCHY}  also shows that $u$  solves \eqref{ca4}.


 \vv \textit{Uniqueness.}  Let $u$ be a bounded classical  solution to  \eqref{ca4}. To prove uniqueness we will use  a quite standard probabilistic argument based on  the It\^o  formula  (cf.  
\cite[Section 4.4]{Applebaum}).  First we recall the
  L\'evy-It\^o decomposition formula (see \cite[Chapter 4]{Sato}
 or 
\cite[Chapter 2]{Applebaum}).
 According to  \eqref{ft1} this formula says that on the fixed stochastic basis there exist a $Q$-Wiener 
 process $W^Q = (W^Q_t)$
with covariance
matrix  $Q$   and 
an independent Poisson random measure $N$ on $\R_+ \times ( \R^d \setminus \{0\})$ 
with
intensity measure $l \otimes \nu$ (here $l$ is Lebesgue measure on $\R_+$) 
such that
\begin{equation} \label{levyito}
Z_t = at + W_t^Q + 
\int_0^t \int_{\{ |x| \le 1\} } x 
\tilde N(ds, dx) 
+
 \int_0^t \int_{\{ |x| > 1 \} } z  N(ds, dx),
\end{equation}
$t \ge 0$; here $\tilde N$ is the compensated Poisson measure (i.e., $\tilde N 
(dt,  dx)$ $ = N(dt, dx)-  dt \nu(dx)$).
We apply   the It\^o  formula  to  the   Ornstein-Uhlenbeck 
process $(X_t^x) $. 
  We fix $t >0$ and  define, for $s \in [0,t]$, $x \in \R^d,$ 
$v(s,x)= u(t-s,x)$. 
  We have 
 \begin{align*}
& v(t,X_t^x) - v(0,x) = f(X_t^x) - u(t,x) 
\\
& = \int_0^t \int_{ \R^d \setminus \{ 0\}} [ u(t-r,X_{r-}^x + x) -
u(t-r, X_{r-}^x)]
   \, \tilde N(dr, dx) 
\\ &
+  \int_0^t \langle D u(t-r , X_r^x), dW_r^Q\rangle   
+ \int_0^t \big (-\partial_s u(t-r , X_r^x) + \LO_0 u(t-r,X_r^x)\big) dr.
 \end{align*}
Since the last integral is zero, by taking 
the expectation, we get
 $
  \E[f(X_t^x)] = u(t,x),
 $ 
and, therefore, $u(t,x) = P_t f(x)$, $x \in \R^d$.
\end{proof}

\subsection{An invariant  core  in $C_0(\R^d)$ 
}\label{core problem}
In this section    we study the  Ornstein-Uhlenbeck 
semigroup $(P_t)$ acting on $C_0(\R^d)$. This is a strongly continuous semigroup or a $C_0$-semigroup of contractions (cf. \eqref{strong1}). 
We  determine a  core $\D_0$ which is invariant for the semigroup.  

Let us denote by $\LO$ the {\it generator} of $(P_t)$, i.e., 
$ \LO f$ $=\lim_{t\to 0}\frac{P_t f - f}{t},$
with domain  ${D}(\LO)$ being the set of all $f \in C_0(\R^d)$ such that the previous limit 
exists as a limit
 in $ C_0(\R^d)$
(see, for instance, \cite{EN} for the theory of linear $C_0$-semigroups). Let us recall the   general definition of core for a $C_0$-semigroup (cf. \cite[Section 31]{Sato}).

Let $(S_t)$ be a $C_0$-semigroup on a Banach space $X$ with generator $\cal A : D({\cal A})$ $ \subset X$ $\to X$.
A \textit{core}  for $(S_t)$ (or for $\cal A$)  is a subspace 
  $\D \subset D({\cal A})$ such that
 for every $\psi \in D({\cal A})$ there exists a sequence $(\psi_n) \subset \D$ which verifies: $\psi_n \to \psi$ and  ${\cal A} \psi_n  \to
  {\cal A} \psi$ in $X$.
%

Using also Theorem \ref{PCAUCHY}  we show that the  space 
$$\D_0 = \{ f \in C^2_0 (\R^d) \; : \;x \mapsto \langle Ax , Df(x) \rangle \in 
C_0(\R^d)  \}
$$ 
is an invariant  core for $(P_t)$.
\begin{theorem} \label{cor1} Let ${\cal L}$ be the generator of the OU semigroup $(P_t)$ in $C_0(\R^d)$. The following statements hold:

\vv (i)  $\D_0 \subset D({\cal L})$ and ${\cal L} f = {\cal L}_0 f$, 
for any $f \in \D_0$ (${\cal L}_0$ is defined in (\ref{ouin}));

\vv (ii) $\D_0$ is invariant for $(P_t)$, i.e., $P_t (\D_0) \subset \D_0$, $t 
\ge 0$;
\vv (iii)  $\D_0$ is a {core} for $\LO$.
\end{theorem}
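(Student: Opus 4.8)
The plan is to prove the three assertions in order, with (i) and (ii) doing most of the work and (iii) following from a standard resolvent/core criterion once invariance is in hand.

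\medskip
\noindent\textbf{Step 1: proving (i), that $\D_0 \subset D(\LO)$ with $\LO f = \LO_0 f$.}
First I would fix $f \in \D_0$ and use the master formula \eqref{Peq5} from Theorem \ref{PCAUCHY}, namely $P_t f(x) = f(x) + \int_0^t \LO_0(P_s f)(x)\, ds$, which I then rewrite as
\begin{equation*}
\frac{P_t f(x) - f(x)}{t} = \frac{1}{t}\int_0^t \LO_0(P_s f)(x)\, ds.
\end{equation*}
Since Theorem \ref{PCAUCHY} already guarantees that $s \mapsto \LO_0(P_s f)(x)$ is continuous on $[0,+\infty)$ with limit $\LO_0 f(x)$ as $s \to 0^+$, the right-hand side converges pointwise to $\LO_0 f(x)$ as $t \to 0^+$. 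The genuine task here is to upgrade this pointwise convergence to convergence \emph{in the sup-norm of $C_0(\R^d)$}, which is what membership in $D(\LO)$ requires. I would first check that $\LO_0 f \in C_0(\R^d)$: the Brownian and drift terms and the integral term of $\LO_1 f$ lie in $C_0(\R^d)$ whenever $f \in C^2_0(\R^d)$ (using \eqref{nuu} and dominated convergence), while the troublesome term $\langle Ax, Df(x)\rangle$ lies in $C_0(\R^d)$ \emph{precisely by the defining condition of $\D_0$}. To get the uniform limit I would control $\|\frac{1}{t}\int_0^t \LO_0(P_s f)\,ds - \LO_0 f\|_0$ by establishing that $s \mapsto \LO_0(P_s f)$ is continuous as a map $[0,T] \to C_0(\R^d)$; term by term this uses \eqref{dd2}, the contraction and invariance properties of $(P_t)$ on $C_0(\R^d)$, and the observation $P_s(Df), P_s(D^2 f) \to Df, D^2 f$ appropriately.

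\medskip
\noindent\textbf{Step 2: proving (ii), invariance $P_t(\D_0) \subset \D_0$.}
Fix $f \in \D_0$ and $t \ge 0$, and set $g = P_t f$. That $g \in C^2_0(\R^d)$ follows by differentiating under the integral in \eqref{for1}, using $Dg(x) = e^{tA^*} P_t(Df)(x)$ and $D^2 g(x) = e^{tA^*} P_t(D^2 f)(x) e^{tA}$, together with the fact that $C_0(\R^d)$ is invariant for $(P_t)$. The crux of invariance is the remaining condition: I must show $x \mapsto \langle Ax, Dg(x)\rangle \in C_0(\R^d)$. Using the commutation identity $\langle Ax, DP_t f(x)\rangle = \langle e^{tA}Ax, P_t(Df)(x)\rangle$ (this is exactly formula \eqref{cd4}), I would write $\langle e^{tA}Ax, P_t(Df)(x)\rangle = P_t\big(\langle A\,\cdot\,, Df\rangle\big)(x) + \text{(correction terms)}$, or more cleanly use that $\langle Ax, Dg\rangle$ is expressible through $P_t$ applied to $\langle A\,\cdot\,, Df\rangle \in C_0(\R^d)$ plus terms involving $Df \in C_0(\R^d)$; since $C_0(\R^d)$ is $(P_t)$-invariant, each piece vanishes at infinity.

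\medskip
\noindent\textbf{Step 3: proving (iii), that $\D_0$ is a core.}
Here I would invoke the standard criterion (see \cite[Section 31]{Sato} or \cite{EN}): a subspace $\D \subset D(\LO)$ that is invariant under the semigroup and dense in $X$ is automatically a core for $\LO$. By (i) we have $\D_0 \subset D(\LO)$, by (ii) we have invariance $P_t(\D_0) \subset \D_0$; thus it remains only to verify that $\D_0$ is \emph{dense} in $C_0(\R^d)$. For density I would exhibit enough functions in $\D_0$ — for instance products of a smooth compactly-supported cutoff with the approximants from Lemma \ref{Plemma}, or directly the space $C^2_K(\R^d)$ of $C^2$ functions with compact support, which sits inside $\D_0$ because the condition $\langle Ax, Df\rangle \in C_0(\R^d)$ is trivially satisfied when $Df$ has compact support — and $C^2_K(\R^d)$ is dense in $C_0(\R^d)$.

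\medskip
\noindent\textbf{The main obstacle} I expect is the uniform (norm) convergence in Step 1 and, relatedly, verifying the vanishing-at-infinity condition in Step 2; both hinge on the fact that the only term in $\LO_0 f$ with potentially unbounded (linearly growing) coefficients is $\langle Ax, Df(x)\rangle$, and the entire design of $\D_0$ is precisely to force this term into $C_0(\R^d)$. Handling this term uniformly, without assuming the moment condition \eqref{add1}, is the technical heart of the argument, and the commutation formula \eqref{cd4} together with the $(P_t)$-invariance of $C_0(\R^d)$ is the key that makes it go through.
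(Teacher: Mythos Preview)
Your Step 2 contains a genuine gap that undermines the whole plan. You propose to write
\[
\langle e^{tA}Ax,\, P_t(Df)(x)\rangle \;=\; P_t\big(\langle A\,\cdot\,, Df\rangle\big)(x) \;+\; \text{(correction terms)},
\]
but if one actually computes the correction (using $Ae^{tA}=e^{tA}A$ inside the integral \eqref{for1}) one finds
\[
\langle e^{tA}Ax,\, P_t(Df)(x)\rangle \;=\; P_t\big(\langle A\,\cdot\,, Df\rangle\big)(x) \;-\; \int_{\R^d} \langle Ay,\, Df(e^{tA}x + y)\rangle \,\mu_t(dy).
\]
The last integral is absolutely convergent only when $\int |y|\,\mu_t(dy)<\infty$, i.e.\ when $\E|Y_t|<\infty$, which is exactly the first--moment condition \eqref{add1} the paper is set up to \emph{avoid} (cf.\ \eqref{uno1}). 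So your direct decomposition fails precisely in the regime of interest, and the vague phrase ``terms involving $Df\in C_0$'' hides an unbounded weight $|y|$.

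This gap also contaminates Step 1. To show that $s\mapsto\LO_0(P_sf)$ is norm-continuous into $C_0(\R^d)$ you need in particular $\LO_0(P_sf)\in C_0(\R^d)$, hence $\langle Ax, DP_sf(x)\rangle\in C_0(\R^d)$---which is the statement of (ii). Your argument for (i) therefore presupposes (ii), and your argument for (ii) needs \eqref{add1}.

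The paper breaks this circle by bringing in a \emph{second} integral identity. Since $f\in\D_0$ forces $\LO_0 f\in C_b(\R^d)$, the It\^o formula \eqref{dr4} applied to $f(X_t^x)$ has bounded drift and zero-mean stochastic integrals, yielding directly
\[
P_t f(x) \;=\; f(x) + \int_0^t P_s(\LO_0 f)(x)\,ds
\]
(formula \eqref{f5}, with $P_s$ on the \emph{outside}). Because $\LO_0 f\in C_0(\R^d)$ and $(P_t)$ is strongly continuous on $C_0(\R^d)$, sup-norm convergence in (i) is then immediate from \eqref{tre}. For (ii), comparing \eqref{f5} with \eqref{Peq5} and differentiating in $t$ gives $\LO_0 P_t f = P_t \LO_0 f \in C_0(\R^d)$; since $P_tf\in C_0^2(\R^d)$ makes $\LO_1 P_t f\in C_0(\R^d)$ elementary, subtraction isolates $\langle Ax, DP_tf(x)\rangle\in C_0(\R^d)$ without ever invoking \eqref{add1}. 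Your Step 3 is correct as written.
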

\begin{proof} {\bf (i)} Let $f \in {\D}_0$. 
We will use It\^o's 
formula (see   \cite[Section 4.4]{Applebaum}). Since in 
particular  $f \in C^2_b(\R^d)$ we have  $\P$-a.s. (cf. \eqref{levyito})
\begin{gather} \label{dr4}
f(X_t^x) = f(x) + \int_0^t {\cal L}_0 f(X_s^x)ds 
\\
\nonumber 
+  \int_0^t \int_{ \R^d \setminus \{ 0\}} [ f (X_{s-}^x + y) -
f(X_{s-}^x)]
   \, \tilde N(ds, dy)
 + \int_0^t   \langle D f  (X_{s}^x)  , dW^Q_s \rangle, \;\; x \in \R^d,
\end{gather}
$t \ge 0$. Since ${\cal L}_0 f$ is a bounded function by taking the  
expectation we get (recall that  $f$ and $Df$ are bounded and so the stochastic 
integrals have both  mean zero)
\begin{equation} \label{f5}
 P_t f(x)= f(x) + \int_0^t P_s \LO_0  f  (x) \mbox{d}s,\;\; x \in \R^d.
\end{equation}
We can write for $t>0$, $x \in \R^d,$
\begin {equation} \label{tre}
 \frac{P_t f (x) - f(x)}{t} - {\cal L}_0 f(x) =
  \frac{1}{t}\int_0^t [ P_s \LO_0  f  (x) - {\cal L}_0 f(x)]  \mbox{d}s.
 \end{equation}
From this formula it is easy to deduce that $f \in D({\cal L})$ and also that 
${\cal L } f  = {\cal L}_0 f$. 

\vv{\bf (ii)} Differentiating under the integral sign one checks that  $P_t (C^2_0 (\R^d)) \subset C^2_0 (\R^d)$, $t \ge 0$. Thus 
to prove that $\D_0$ is invariant for the semigroup, it is enough to show that 
for $f \in \D_0$, $t \ge 0$, we have that
\begin{equation}
\label{inv4}
 x \mapsto  \langle  Ax , DP_t f(x)\rangle \in C_0 (\R^d). 
\end{equation} 
To check  \eqref{inv4} we use
Theorem \ref{PCAUCHY} and \eqref{f5}. 
 We know that  for  $x \in \R^d,$ $t 
\ge 0,$
$$
 \int_0^t P_s \LO_0  f  (x) \mbox{d}s = \int_0^t \LO_0 P_s    f  (x) 
\mbox{d}s.
$$
Since $s \mapsto \LO_0 P_s    f  (x) $ and $s \mapsto P_s \LO_0     f  (x)$ are both
continuous functions we get 
$$
\LO_0 P_t    f(x) = P_t \LO_0    f(x),  \;\;  t \ge 0,\; x \in \R^d.
$$
Let us fix $t>0$. The previous identity shows in particular that   $\LO_0 P_t    f
 \in C_0 (\R^d)$.  We have (see \eqref{vi2})
$$
 \LO_0 P_t    f(x) = \LO_1 P_t    f(x) + \langle Ax, D P_t f(x) \rangle, \;\; x 
\in \R^d,
$$
 and since $P_t f \in C_0(\R^d)$ one can easily check that $\LO_1 P_t    f \in C_0 (\R^d)$ (cf. the proof of  \cite[Theorem 31.5]{Sato}).
%
It follows  
 that  $\langle A \,( \cdot), D P_t f( \cdot) \rangle$ $\in C_0 (\R^d)$ 
and this gives \eqref{inv4}.

\vv {\bf (iii)} We can use a well-known criterium   for the existence of a core (see 
\cite[Proposition II.1.7]{EN}). 
First $\D_0 $ is dense in $C_0 (\R^d)$ (to this purpose note that $C^2_K 
(\R^d)$ 
is contained in $\D_0$);  then $\D_0$ is invariant for the semigroup by (ii). 
It follows that $\D_0 $ is a core for ${\cal L}$.
This  completes the proof.
\end{proof}

\begin{corollary}\label{co1}
If $f\in \D_0$, then
\begin{align*}\label{Peq2}
P_t f(x)&= f(x) + \int_0^t P_s (\LO_0 f ) (x) \mbox{d}s
= f(x) + \int_0^t \LO_0 (P_s f ) (x) \mbox{d}s, \,\,\,\,t\geq0, x\in\R^d.
\end{align*}
\end{corollary}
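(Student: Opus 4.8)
The plan is to observe that the Corollary is an immediate consequence of the two formulas already established, the only genuine point being to verify that both right-hand sides are well defined when $f \in \D_0$. Accordingly I would not carry out any fresh computation, but rather assemble the pieces from Theorem \ref{PCAUCHY} and Theorem \ref{cor1}.

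First I would dispose of the second equality. Since $\D_0 \subset C^2_0(\R^d) \subset C^2_b(\R^d)$, formula \eqref{Peq5} of Theorem \ref{PCAUCHY} applies verbatim and yields $P_t f(x)= f(x) + \int_0^t \LO_0 (P_s f)(x)\,\mathrm{d}s$. Nothing further is needed here, and in particular this part uses no special property of $\D_0$ beyond $f \in C^2_b(\R^d)$.

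For the first equality I would invoke the key structural feature built into $\D_0$: by Theorem \ref{cor1}(i), $\LO_0 f = \LO f \in C_0(\R^d)$, so $\LO_0 f$ is \emph{bounded}. This is precisely the property that fails for a generic $f \in C^2_b(\R^d)$, for which $\LO_0 f$ may grow linearly so that $P_s(\LO_0 f)$ is not even defined. With $\LO_0 f$ bounded, the It\^o-formula identity \eqref{dr4}, after taking expectations (the stochastic integrals against $\tilde N$ and $W^Q$ having zero mean because $f$ and $Df$ are bounded), gives exactly \eqref{f5}, that is $P_t f(x) = f(x) + \int_0^t P_s(\LO_0 f)(x)\,\mathrm{d}s$. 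Since this was in fact already derived inside the proof of Theorem \ref{cor1}(i), I would simply cite it rather than repeat the argument.

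Finally, to reconcile the two expressions I would note that the commutation relation $\LO_0 P_s f(x) = P_s \LO_0 f(x)$, established in the proof of Theorem \ref{cor1}(ii) for $f \in \D_0$, makes the two integrands coincide pointwise in $s$, so the two right-hand sides agree term by term. I do not expect any substantive obstacle; the only subtlety worth flagging is the integrability and well-posedness of $s \mapsto P_s(\LO_0 f)(x)$, which is guaranteed precisely by the defining condition that $x \mapsto \langle Ax, Df(x)\rangle$ lie in $C_0(\R^d)$, and which is exactly what singles out $\D_0$ inside $C^2_0(\R^d)$.
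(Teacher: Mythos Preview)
Your proposal is correct and essentially matches the paper's approach: the paper simply observes that, by general $C_0$-semigroup theory, both identities hold with $\mathcal{L}$ in place of $\mathcal{L}_0$ for any $f\in D(\mathcal{L})$, and then invokes Theorem~\ref{cor1} to identify $\mathcal{L}$ with $\mathcal{L}_0$ on the invariant set $\D_0$. You arrive at the same conclusion by citing the concrete instances of those identities already established in Theorems~\ref{PCAUCHY} and~\ref{cor1} (namely \eqref{Peq5} and \eqref{f5}); the final commutation remark is correct but redundant, since each integral formula has already been matched with $P_t f(x)$ separately.
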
 
\begin{proof} From a general result of semigroup theory the  previous formula holds when ${\cal L}_0$ is replaced by the  generator ${\cal L}$ and $f \in D({\cal L})$. Using Theorem \ref{cor1} we easily obtain the assertion. 
 \end{proof}

\begin{remark}\label{remimp} 
We have mentioned formula \eqref{ya1} proved in \cite[Theorem 
3.1]{SY}. This  implies that 
$C^2_K(\R^d) \subset  D({\cal L})$ and ${\cal L}_0 f = {\cal L} f$, $f \in C^2_K(\R^d)$.
However
Theorem \ref{PPCAUCHY} can not be deduced  by \cite[Theorem 
3.1]{SY} even if we require that  $f$ belongs to 
$C^2_K(\R^d)$. This is because  the space $C_K^2(\R^d)$ is not 
invariant for  $(P_t)$. From formula \eqref{ya1} 
we  only get
\begin{equation*}\label{satoconsequence}
P_t f(x)= f(x) + \int_0^t \LO (P_s f ) (x) \mbox{d}s, 
\,\,\,\,t\geq0, \, 
x\in\R^d,\; f \in C^2_k(\R^d).
\end{equation*}
\end{remark}

\section{ Infinite dimensional Ornstein-Uhlenbeck processes  }

 We consider a  real and separable Hilbert space 
${H}$ with norm $|\cdot|$ and inner 
product $\langle \cdot, \cdot \rangle$.
We fix a  L\'evy process $Z = (Z_t)$ with values in $H$ (see \cite[Chapter 4]{PZ}). Similarly to 
the case when  $H=\R^d$, $Z$  is an
$H$-valued process defined  on
 some stochastic basis $(\Omega, {\cal F}, ({\cal F}_t)_{t \ge 0}, \P)$,
  continuous in probability,
 having stationary independent increments, c\`adl\`ag  trajectories,
  and such that $Z_0 =0$.
 One has that
 \begin{equation*}
\E [e^{i \langle Z_t , h \rangle }]  = \exp ( - t \psi (h) ), \; h
\in H, \; t \ge 0,
 \end{equation*}
where the exponent  $\psi: H \to {\mathbb C} $ is defined similarly
 to  \eqref{ft1} as
\begin{equation}
\label{infi}
 \psi(h)= \frac{1}{2}\langle Q h,h  \rangle   - i \langle a, h\rangle
- \int_{H} \Big(  e^{i \langle h,y \rangle }  - 1 - \, { i \langle h,y
\rangle} \, \mathds{1}_{\{ |y| \le 1\}} \, (y) \Big ) \nu (dy);
\end{equation}
 here 
   $Q: H \to H$ is a non-negative symmetric {\it trace-class} operator, $a
\in
 H$,
 and $\nu$
 is the L\'evy (jump) measure of $Z$  
 (i.e., $\nu$ is a $\sigma$-finite (Borel)
measure on $H$, such that  \eqref{nuu} 
holds with $\R^d$ replaced by $H$).  

Note that a L\'evy-It\^o  decomposition formula 
as \eqref{levyito} holds also in infinite dimensions:
\begin{equation} \label{levyito1}
Z_t = at + W_t^Q + 
\int_0^t \int_{\{ |x| \le 1\} } x 
\tilde N(ds, dx) 
+
 \int_0^t \int_{\{ |x| > 1 \} } x  N(ds, dx),
\end{equation}
$t \ge 0$, where $ N$ is the  Poisson random measure associated to $Z$, $a \in H$  and $W^Q = (W^Q_t)$ is a  $Q$-Wiener 
 process   with values in $H$ which is independent of $N$ 
(cf. Section 2 in \cite{Applebaumart1}).

 Let $A : D(A ) \subset H \to H$ be the generator of a $C_0$-semigroup $(e^{tA})$ $= (e^{tA})_{t \ge 0}$ on $H$. By $A^* : D(A^*) \subset H \to H$ we denote its 
adjoint operator which generates the $C_0$-semigroup $(e^{tA^*})$.

We will deal with the following generalization of the {\it Ornstein-Uhlenbeck 
process}  considered  in \eqref{2}:  
\begin{equation}\label{22}
 X^x_t  =  e^{tA} x + \int_0^t e^{(t-s)A}  dZ_s = e^{tA} x + Y_t,
  \;\; t \ge 0, \; x \in H.
\end{equation}
The stochastic integral is still a limit in probability of suitable Riemann sums  (we refer to \cite{Ch} and \cite{PZ}).
The associated  \emph{Ornstein-Uhlenbeck semigroup} $(P_t)$  is still defined   as 
\begin{equation}
\label{ouuu}
P_t f(x) = \E [f(X_t^x)] =  \int_{H} f(e^{tA} x + y) \mu_t(dy),\;\;
t \ge 0, \; x \in H, \; f \in B_b(H).
\end{equation} 
where $\mu_t$ is the law of $Y_t$ and has characteristic function
\begin{equation}\label{cd}
\hat {\mu_t} (h) 
= \E [e^{i \langle  Y_t, h\rangle}]
=
   \exp { \big(- \int_0 ^t \psi (  e^{s A^*}  h) ds \big)}, 
\;\;\; h \in H,\; t \ge 0
\end{equation}
(cf.  \cite[Corollary 1.7]{Ch} or   \cite{FR}).   
 In constrast with Section 3.2
if $H$ is infinite dimensional then the space $C_0(H)$ is in general not invariant for 
the OU semigroup 
(see page 91  in 
\cite{Applebaumart1} for more details). Hence
it is convenient to deal with $(P_t)$ acting on $C_b (H) $ or $UC_b(H)$ since both spaces 
are 
invariant  (however recall that $(P_t)$ is not strongly continuous neither on 
$C_b (H) $ nor on $UC_b(H)$ if we consider the sup-norm topology; see \cite{Ce}).



   It is important to note that \eqref{con2} and \eqref{con3}
 holds for $(P_t)$ even in this infinite dimensional setting, i.e.,  
for any $f \in C_b(H)$,  the real mapping:
 \begin{gather} \label{con22} 
(t,x) \mapsto P_tf(x) \; \text{ is continuous on } \;
 [0, +\infty) \times \R^d \; \;\text {and }
\\
\label{con33}
 \lim_{t \to 0^+ } P_t f = f, \;\;\; \text{uniformly on compact sets of $H$.}
\end{gather}
 We refer to    
\cite[Lemma 2.1]{BRS}. To this purpose note that $t \mapsto \mu_t$ is continuous with respect to the weak topology  of Borel probability measures on $H$ by \cite[Lemma VI.2.1]{Parta}. Indeed for any $h \in H$, 
  $t \mapsto \hat \mu_t(h)$ is continuous on $[0, +\infty)$ and moreover, according to page 19 in \cite{FR}, for any $T >0,$ the family $(\mu_t)_{t \in [0,T]}$ is tight. 
Results similar to \eqref{con22} and \eqref{con33} are proved in     
  \cite[Theorem 4.2]{FR} and in \cite[Theorem 4.1]{Applebaumart1}.


To study the OU semigroup it is useful to fix a notion of pointwise convergence of  functions (see 
 \cite{EK},  \cite{P}, \cite{D} and \cite{Ma}). 

A sequence $(f_n) \subset {C}_b (H) $ 
is said to be  {\it $\pi$-convergent} to
a map $f \in C_b(H)$ and we shall write \ \ 
\begin{equation} \label{pii}
 f_n \buildrel \pi \over 
\longrightarrow f
\end{equation}
 as $n \to \infty $ (or $\lim_{n \to \infty} f_n $ $ 
\buildrel \pi \over =
f$) if it converges  boundedly and pointwise,
i.e.,
 $\sup _{n \ge 1 } \sup_{x \in H} | f_n(x) | $ $ = \sup_{n \ge 1} \|f_n \|_0 <  \infty$  and
$\lim _{n \to \infty } f_n (x) \, =\, f (x),$ 
$x\in H$. 

We mention 
that a related notion of uniform convergence on compact sets can also  be used (cf. \cite{Ce} and \cite{CG}).


\section {The Cauchy problem  in infinite dimensions}


Let us introduce the infinite-dimensional OU operator ${\cal L}_0$
 associated to the OU process introduced in  Section 4: 
\begin{gather}
\nonumber
 {\cal L}_0 f(x) =   
 \langle A x ,    D f (x) \rangle +  {\cal L}_1 f(x);
\\ \label{vi22}
{\cal L}_1 f(x) = \int_{H}\left(  f(x+y) - f(x) 
-\mathds{1}_{\{\vert y \vert \leq 1\}} 
\langle y , Df(x) \rangle\right)\nu(\mbox{d}y)  
\\ \nonumber +  \frac{1}{2}\mbox{Tr}(QD^2 f(x)) + \langle a, Df(x)\rangle, 
\end{gather}
where $f \in C_b^2(H)$, $x \in D(A)$
($a$, $Q$ and $\nu$ are given  in the L\'evy-Khintchine formula \eqref{infi}). Note that by a well-known result  $QD^2f(x)$ is a trace class operator, for any $x \in H$ (see, for instance, \cite[Appendix  C]{DZ}).
We consider   an infinite dimensional Cauchy problem 
which generalizes \eqref{ca4}, i.e., 
 \begin{align} \label{cau4}
\begin{cases} \partial_t u (t,x) = \LO_0 u(t, x)
\\
u(0,x) = f(x).
\end{cases}
\end{align}
Let us introduce the space  
 \begin{equation}
\label{ca1}
C^2_A(H) = \{ f \in C^2_b(H)  \; : \; \; 
 \text{$Df(x) \in D(A^*)$, $x \in H$, and} \;  
A^* Df \in C_b(H,H) \}.
\end{equation}
A similar space has been considered in  \cite{GK} and \cite{Applebaum}. However   
in contrast with 
 to  \cite{GK} and \cite{Applebaum} we do not require 
that  the mapping $ x \mapsto \langle x, A^* Df(x) \rangle$ 
belongs to $C_b(H)$. Clearly, when $H= \R^d$ the space $C^2_{A}(H)$ coincides with $C^2_b(\R^d)$. Note that if $f \in C^2_{A}(H)$ then
\begin{equation} \label{fuy}
{\cal L}_0 f(x) =   
 \langle  x ,    A^* D f (x) \rangle +   {\cal L}_1 f(x), \;\;   x \in H.
\end{equation}

A {\it bounded classical solution $u$ } to the Cauchy problem \eqref{cau4} 
 is a 
bounded and continuous real function defined on $E = [0,+\infty) \times H$,  such that 

\vv (i) $u(t, \cdot ) \in C^2_A(H)$, $t \ge 0$,  and 
 $D_x u$, $A^* D_x u : E \to H,$ $D^2_x u : E \to L_2(H)$ are bounded and continuous functions;

\vv (ii) $u(\cdot ,x)$ is a $C^1$-function on $[0,+\infty)$, $x \in H,$ and $u$ solves \eqref{cau4}.

\medskip
To show solvability of  \eqref{cau4} we first extend
Theorem 
\ref{PCAUCHY} to infinite dimensions. 
\begin{theorem}\label{PCAUCHYi}
 Let $f\in C_b^2(H)$. Let $(P_t)$ be the OU semigroup  defined in \eqref{ouuu}. The following statements hold:

\vv (i) for any $x \in D(A)$, the real mapping $t \mapsto 
\LO_0 (P_t f ) (x)$ is continuous on $[0, + \infty)$ and $\lim_{t \to 0^+}
  \LO_0 (P_t f ) (x) = \LO_0 f (x)$. Moreover, we have:
\begin{equation}\label{Peq53}
P_t f(x)= f(x) + \int_0^t \LO_0 (P_s f ) (x) \mbox{d}s, \,\,\,\,t\geq0, \;\;
x\in D(A);
\end{equation}
(ii) if in addition $ f \in C^2_A(H)$, then  (i) and  \eqref{Peq53} hold 
for any $x \in H$.
\end{theorem}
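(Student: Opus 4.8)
The plan is to mirror the finite-dimensional argument of Theorem \ref{PCAUCHY}, now keeping careful track of the unbounded operator $A$ by working at points $x \in D(A)$. The starting point is the explicit formula for $\LO_0(P_t f)(x)$. Differentiating under the integral sign in \eqref{ouuu} and using the characteristic function \eqref{cd}, one obtains, for $x \in D(A)$, an analogue of \eqref{dd2}:
\begin{align*}
\LO_0 (P_t f ) (x) =\, &\langle A x , P_t Df (x) \rangle + \langle e^{tA} a, P_t Df(x)\rangle + \tfrac{1}{2}\mbox{Tr}(Qe^{tA^*}P_t D^2f (x)e^{tA})\\
&+ \int_{H}\left(  P_tf(x+y) - P_t f(x) -\mathds{1}_{\{\vert y \vert \leq 1\}} \langle e^{tA}y , P_t Df(x) \rangle\right)\nu(\mbox{d}y),
\end{align*}
where the crucial identity is $DP_tf(x) = e^{tA^*} P_t Df(x)$, so that $\langle Ax, DP_tf(x)\rangle = \langle Ax, e^{tA^*}P_tDf(x)\rangle = \langle e^{tA}Ax, P_tDf(x)\rangle$; this pairing is finite precisely because $x \in D(A)$. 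First I would establish this formula and deduce the continuity of $t \mapsto \LO_0(P_tf)(x)$ together with the limit $\LO_0(P_tf)(x) \to \LO_0 f(x)$ as $t \to 0^+$, using \eqref{con22} and the joint continuity of the semigroup $(e^{tA})$ on the strong-operator topology, restricted to $x \in D(A)$ where $e^{tA}Ax = Ae^{tA}x$ stays bounded for $t$ in compacts.

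For the integral formula \eqref{Peq53} I would repeat the two-step scheme of Theorem \ref{PCAUCHY}. In the first step I verify \eqref{Peq53} on the cylindrical exponential functions $f(x) = e^{i\langle h, x\rangle}$ spanning $V(H)$: here the computation is identical to the $\R^d$ case, differentiating $P_tf(x) = e^{i\langle e^{tA^*}h, x\rangle}\exp(-\int_0^t \psi(e^{sA^*}h)\,ds)$ in $t$, which at $x \in D(A)$ produces exactly $\LO_0(P_tf)(x)$ via the Lévy--Khintchine representation \eqref{infi}. The second step uses the Hilbert-space version of the approximation Lemma \ref{Plemma} (cylindrical approximants $f_{nm} \in V(H)$ with uniformly bounded $f_{nm}, Df_{nm}, D^2f_{nm}$ converging $\pi$-wise to $f, Df, D^2 f$) and passes to the limit first in $m$, then in $n$, in the identity $P_t f_{nm}(x) = f_{nm}(x) + \int_0^t \LO_0(P_s f_{nm})(x)\,ds$. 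The dominated-convergence bookkeeping is the same: the small-jump part is controlled by the Taylor estimate $|P_sf_{nm}(x+y)-P_sf_{nm}(x)-\langle y,DP_sf_{nm}(x)\rangle| \le C_T M |y|^2$ together with \eqref{nuu}, the large-jump part by finiteness of $\nu$ on $\{|y|>1\}$, and the Gaussian part by continuity of the trace against the trace-class $Q$. The term $\langle Ax, DP_sf_{nm}(x)\rangle = \langle e^{sA}Ax, P_s Df_{nm}(x)\rangle$ converges because $e^{sA}Ax$ is a fixed vector (this is where $x \in D(A)$ enters) and $P_sDf_{nm}(x) \to P_sDf(x)$ by $\pi$-convergence under the expectation.

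For part (ii), the additional hypothesis $f \in C^2_A(H)$ lets one rewrite the drift term as in \eqref{fuy}, namely $\langle Ax, DP_sf(x)\rangle = \langle x, A^*DP_sf(x)\rangle$ with $A^*DP_sf(x) = e^{sA^*}A^* P_s Df(x)$ now a \emph{bounded} continuous $H$-valued function of $x$ on all of $H$; hence every term in $\LO_0(P_sf)(x)$ extends continuously from $D(A)$ to $x \in H$, and both sides of \eqref{Peq53} are continuous in $x$, so the identity propagates from the dense set $D(A)$ to all of $H$. The main obstacle I anticipate is the first step: justifying the interchange of the $H$-integral $\int_H \cdots \nu(dy)$ with the limits and with the time derivative when $H$ is infinite-dimensional, since one no longer has local compactness. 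This is handled by the uniform bound \eqref{M}-type estimate on $\|D^2 P_s f_{nm}\|_0$ and the integrability condition \eqref{nuu}, exactly as in the finite-dimensional proof, so the argument goes through verbatim once the correct infinite-dimensional analogue of Lemma \ref{Plemma} is in hand.
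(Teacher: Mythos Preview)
Your proposal is correct and follows essentially the same route as the paper. The only organizational difference is that the paper splits your ``second step'' into two: it first passes from exponentials to cylindrical functions $f(x)=g(\langle x,e_1\rangle,\dots,\langle x,e_N\rangle)$ with $g\in C^2_b(\R^N)$ using the finite-dimensional Lemma \ref{Plemma}, and then from cylindrical to general $f\in C^2_b(H)$ via a separate approximation $f_n(x)=f(P_n x)$ (Lemma \ref{uni1}); in this last passage the paper singles out the trace term and uses Hilbert--Schmidt convergence of $D^2f_n(x)\to D^2f(x)$, which is the precise version of what you call ``continuity of the trace against the trace-class $Q$.'' Your treatment of part (ii) by rewriting $\langle Ax, DP_sf(x)\rangle=\langle e^{sA}x, P_s(A^*Df)(x)\rangle$ and extending by density of $D(A)$ is exactly what the paper does.
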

To prove the theorem we will use results from Section 3.1. 

Recall that a   function $g: H  \to \R$ is called {\it cylindrical} if there exist 
${h_1, \ldots, h_n \in H}$ and $l : \R^n \to \R$ such that  
$$
g(x) = l (\langle x, h_1 \rangle, \ldots, \langle x, h_n \rangle ),\;\;\; x \in H.
$$
Let us fix an orthonormal basis $(e_k)$ in $H$ and consider the orthogonal projections $P_n : H \to H$,
\begin{equation}
\label{pr}
P_n x = \sum_{k=1}^n \langle x, e_k \rangle e_k,\;\;\; x \in H.
\end{equation} 
We have the following quite standard approximation result. 
\begin{lemma}
\label{uni1} Let $f \in C^2_b (H)$ and consider the cylindrical functions $(f_n) \subset C^2_b (H)$, $f_n (x) = f(P_n x)$, $x \in H$, $n \ge 1$. We have, for any $x, h \in H$, passing to the limit as $n \to \infty$,
$$
f_n(x) \to f(x),\;\;\; \langle Df_n(x), h \rangle \to 
 \langle Df(x), h\rangle, \;\;
  Df_n^2 (x) \to D^2f(x) \; \text {in $L_2(H)$};
$$$$
\;\; \text{with} \;\; \sup_{n \ge 1} \sup_{y \in H} \big(|f_n(y)| + |Df_n(y)| + \| D^2f_n(y) \|_{HS} \big) = M < \infty.
$$
\end{lemma}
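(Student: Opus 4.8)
The plan is to prove this cylindrical approximation lemma by a direct computation, exploiting the fact that the projections $P_n$ converge strongly to the identity on $H$ and that Fr\'echet differentiation of the composition $f \circ P_n$ is governed by the chain rule together with the self-adjointness and idempotence of $P_n$. First I would record the pointwise convergence $P_n x \to x$ for every $x \in H$, which is immediate since $(e_k)$ is an orthonormal basis, so that $P_n x = \sum_{k=1}^n \langle x, e_k\rangle e_k \to x$ as $n \to \infty$. Combined with the continuity of $f$, this gives $f_n(x) = f(P_n x) \to f(x)$ at once.

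Next I would compute the derivatives of the cylindrical functions. By the chain rule, since $P_n$ is a bounded linear operator, $Df_n(x) = P_n^* (Df)(P_n x) = P_n (Df)(P_n x)$ using that $P_n$ is self-adjoint; similarly $D^2 f_n(x) = P_n \, (D^2 f)(P_n x) \, P_n$ as operators on $H$. For the gradient convergence I would fix $h \in H$ and write $\langle Df_n(x), h\rangle = \langle (Df)(P_n x), P_n h\rangle$, then split into $\langle (Df)(P_n x) - Df(x), P_n h\rangle + \langle Df(x), P_n h - h\rangle$; the first term vanishes because $Df$ is continuous and $P_n x \to x$ while $|P_n h| \le |h|$, and the second vanishes because $P_n h \to h$. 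For the Hilbert--Schmidt convergence of $D^2 f_n(x) = P_n (D^2 f)(P_n x) P_n$ to $D^2 f(x)$, I would insert and subtract intermediate terms, writing the difference as $P_n\big((D^2f)(P_n x) - D^2 f(x)\big)P_n + \big(P_n (D^2 f)(x) P_n - D^2 f(x)\big)$; the first summand is controlled in $\|\cdot\|_{HS}$ by $\|(D^2 f)(P_n x) - D^2 f(x)\|_{HS} \to 0$ using continuity of $D^2 f : H \to L_2(H)$ together with $\|P_n\|_L \le 1$, and the second summand tends to zero in $L_2(H)$ by the standard fact that $P_n T P_n \to T$ in Hilbert--Schmidt norm for any fixed $T \in L_2(H)$.

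Finally, the uniform bound is the easiest part: since $\|P_n\|_L \le 1$ and $P_n$ is self-adjoint idempotent, we get $|f_n(y)| = |f(P_n y)| \le \|f\|_0$, $|Df_n(y)| = |P_n (Df)(P_n y)| \le \|Df\|_0$, and $\|D^2 f_n(y)\|_{HS} = \|P_n (D^2 f)(P_n y) P_n\|_{HS} \le \|(D^2 f)(P_n y)\|_{HS} \le \sup_{z} \|D^2 f(z)\|_{HS}$, all of which are finite because $f \in C^2_b(H)$. Taking $M$ to be the sum of these three suprema gives the claimed uniform bound.

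The main obstacle I anticipate is the Hilbert--Schmidt convergence $D^2 f_n(x) \to D^2 f(x)$, since operator-norm estimates alone do not suffice: one genuinely needs the fact that $P_n T P_n \to T$ in $\|\cdot\|_{HS}$ for fixed trace-compatible $T$, which requires writing $\|P_n T P_n - T\|_{HS}^2$ as a sum over the orthonormal basis and exploiting dominated convergence for the series $\sum_k |T e_k|^2$. The gradient and function convergences, by contrast, reduce to elementary continuity arguments, and the uniform bound is immediate from the contractivity of $P_n$.
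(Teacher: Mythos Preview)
Your proposal is correct and follows essentially the same approach as the paper: the paper also records $D^2 f_n(x) = P_n D^2 f(P_n x) P_n$, splits the difference into $P_n\big(D^2 f(P_n x) - D^2 f(x)\big)P_n$ and $P_n D^2 f(x) P_n - D^2 f(x)$, and handles the two pieces exactly as you describe (continuity of $D^2 f$ for the first, and a direct Hilbert--Schmidt tail computation for the second). Your treatment of $f_n$, $Df_n$, and the uniform bounds is likewise the same as the paper's, which dismisses those parts as ``easy''.
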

\begin{proof} Since $P_n x \to x$ as $n \to \infty$ and $|P_n x| \le |x|$, $n \ge 1$, $x \in H$, it is easy to prove the assertions about $f_n  $ and $Df_n$. Let us only consider $D^2f_n$ and fix $x \in H$. We have
 $D^2 f_n(x)$   $ =  P_n D^2 f(P_n x) P_n $ and 
$$
\| P_n D^2 f(P_n x) P_n - D^2 f(x) \|^2_{HS}
$$$$
 \le   2 \| P_n [D^2 f(P_n x) - D^2f(x)] P_n  \|^2_{HS} 
+ 2\| P_n D^2 f(x) P_n - D^2 f(x) \|^2_{HS}
$$
$$
\le  2 \| D^2 f(P_n x) - D^2f(x)  \|^2_{HS} + 
2 \sum_{j, k >n}   \langle D^2 f(x) e_j , e_k \rangle^2.
$$
Using also the continuity of $D^2f: H \to L_2(H)$ we find that 
 $$\| P_n D^2 f(P_n x) P_n  - D^2 f(x) \|^2_{HS}  \; 
\to 0 \;\;  \text{ as $n \to \infty $.} 
$$
To prove the uniform estimate on $\| D^2 f_n \|_0$  we  note that 
$\| P_n D^2 f(P_n x) P_n \|_{HS}$ $ \le  \sup_{y \in H}
\|  D^2 f(y)\|_{HS}$, for any $x \in H,$ $n \ge 1$.
\end{proof}

\begin{proof}[Proof of Theorem \ref{PCAUCHYi}] We  follow the method of the proof of Theorem \ref{PCAUCHY}. We only indicate some changes.

First the 
assertion about the continuity of $t \mapsto  
\LO_0 (P_t f ) (x)$ can be proved by  an  identity  like 
\eqref{dd2} with $x \in D(A)$ and $\R^d$ replaced by $H$.

\vv {\bf (i)} We split the proof of (i) into three parts.

\vv \text{\it I Step.} We prove the assertion \eqref{Peq53} when $f$ is
a cylindrical function of the form 
$$
f(x) = 
  l (\langle x, e_1 \rangle, \ldots, \langle x, e_n \rangle ),\;\;\; x \in  H, \;\;\; \text{$l \in V(\R^n)$},
$$
where  $n \ge 1$ and $(e_k)$ is an orthonormal basis in $H$. It is enough  to consider  
  $l (x_1, \ldots, x_n)$ $= $ $
e^{i ( h_1 x_1 + \ldots +    h_n x_n ) },
$ for a fixed $h= (h_1, \ldots, h_n)\in\R^n$. Let us define $ S h = 
 \sum_{j=1}^n h_j e_j \in H$. Note that 
$$
f(y)  = e^{i \,  \langle y, Sh \rangle },\;\; y \in H
$$
(we are  using  the inner product in $H$).
 Let  $x\in D(A)$; we have  
\begin{align*}
&\frac{\partial}{\partial t} (P_t f)(x) \bigg\vert_{t=0} 
=\frac{\partial}{\partial t} (\E[ e^{i \; \langle \, S h , e^{tA}x + \int_0^t e^{(t-s)A} 
dZ_s \rangle}])\bigg\vert_{t=0}=\\
&=i \langle S h , e^{tA} Ax \rangle e^{i\langle Sh , e^{tA}x \rangle} 
e^{-\int_0^t 
\psi (e^{sA^*} Sh) ds } - e^{i \langle Sh ,  e^{tA} x \rangle } \psi ( e^{tA^*} Sh) 
 e^{-\int_0^t \psi ( e^{sA^*} Sh) ds  } \bigg\vert_{t=0} =\\
&=  \langle  D f(x) , A x \rangle - e^{i \langle Sh , x \rangle} \psi(Sh),
\end{align*}
where $\psi$ is the exponent of the L\'evy process $(Z_t)_{t\geq0}$ (see \eqref{infi}).  
Using the L\'evy-Khintchine formula, we find that   
 \begin{equation}
\frac{\partial}{\partial t} (P_t f)(x) \bigg\vert_{t=0} = \LO_0 f(x),\;\; x \in D(A). 
\end{equation}
Similarly, recalling that $Y_t=\int_0^t e^{(t-s)A} \mbox{d}Z_s$, we can compute 
the derivative for $t>0$:
 $
\frac{\partial}{\partial t} (P_t f)(x)$ $ = \LO_0 (P_t f)(x), $ $ t\geq0. 
$
Integrating with respect to $t$, we prove \eqref{Peq53}  for our cylindrical function  $f$.

\vv \text{\it II Step.} We prove the assertion  when $f : H \to \R$
 is cylindrical of the form
$$
f(x) = 
  g (\langle x, e_1 \rangle, \ldots, \langle x, e_N \rangle ),\;\;\; x \in  H, \;\;\; \text{$g \in C^2_b(\R^N)$},
$$
for some  $N \ge 1$.
If $R_N : H \to \R^{N}$, $R_N x = 
 (\langle x, e_1 \rangle, \ldots, \langle x, e_N \rangle )$ then $f(x) = g(R_N x)$, $x \in H.$

According to Lemma \ref{Plemma}
we can  choose 
$(f_{nm})_{n,m\in\N} \subset V (\R^N)$  to approximate  $g$.
Let $F_{nm} : H \to \R$, $F_{nm} (x) = f_{nm}(R_N x) $ and similarly 
 $F_{n} (x) = f_{n}(R_N x) $, $x \in H$. 
By the previous step we can write 
\begin{equation}\label{Peqfnm1}
P_t F_{nm}(x)= F_{nm}(x) + \int_0^t \LO_0 (P_s F_{nm}) (x) \mbox{d}s, 
\,\,\,\,t\geq0, x\in D(A), 
\end{equation}
for any $n,m\in\N$. 
Note that,
for any $n \ge 1$,
\begin{gather}
  F_{nm} \buildrel \pi \over 
\longrightarrow F_n,\;\;\;\;  \langle DF_{nm}(\cdot), h \rangle \buildrel \pi \over 
\longrightarrow \langle DF_n(\cdot), h \rangle,
\\ \nonumber \langle D^2 F_{nm}(\cdot)h, k \rangle \buildrel \pi \over 
\longrightarrow \langle D^2 F_n(\cdot)h, k \rangle \;\;\; 
\text{as} \; m \to \infty;
\\  \nonumber  F_{n} \buildrel \pi \over 
\longrightarrow F,\;\;\;\;  \langle DF_{n}(\cdot), h \rangle \buildrel \pi \over 
\longrightarrow \langle DF(\cdot), h \rangle,
\\ \nonumber\langle D^2 F_{n}(\cdot)h, k \rangle \buildrel \pi \over 
\longrightarrow \langle D^2 F(\cdot)h, k \rangle \;\;\; 
\text{as} \; n \to \infty,
\end{gather}
 $h, k \in H,$ and 
$$
\sup_{y \in H} \big ( |F_{nm}(y)| + |F_n(y)| +
 |DF_{nm}(y)| + |DF_n(y)| + 
  \| D^2F_{nm}(y) \|_L + \|D^2F_n(y) \|_L
  \big) = C 
$$
where $C $ is a constant independent of $n $ and $m$. 
Thanks to the previous formulas in order to pass to the limit in \eqref{Peqfnm1} 
we fix $T>0$, $x \in D(A)$, and study the convergence of $\LO_0(P_s F_{nm})(x)$, 
with $s \in [0,T]$.
The term 
$$
\int_{H} (P_s F_{nm}(x+y) - P_sF_{nm}(x) - \mathds{1}_{\{ |y|\le 1 \}}\langle 
y , DP_s F_{nm}(x)\rangle ) \nu (\mbox{d}y)
$$
can be splitted  as in \eqref{new2};  passing to the limit first as $m \to \infty$ and then as $n \to \infty$, 
we get 
$$
\int_{H} (P_s F_{}(x+y) - P_s F_{}(x) - 
\mathds{1}_{\{ |y|\le 1 \}}\langle 
y , DP_s F_{}(x)\rangle ) \nu (\mbox{d}y).
$$ 
Similarly,   it follows that 
$\int_0^t  \LO_1 (P_s F_{nm})(x) \mbox{d}s $ $\rightarrow \int_0^t  \LO_1 (P_s 
F_{})(x) \mbox{d}s,$ $ t \in [0,T],$ 
first  as $m\rightarrow \infty$ and then as $n\rightarrow \infty$. Finally as in \eqref{cd4} we find,  for $s \in [0,T]$,
$$
 \langle Ax ,  DP_s F_{nm}(x) \rangle = \langle 
e^{sA} Ax , P_s ( DF_{nm})(x) \rangle 
\rightarrow 
\langle Ax ,  DP_s 
F(x) 
\rangle,
$$
first  as $m\rightarrow \infty$ and then as $n\rightarrow \infty$. 

Moreover, 
$
| \langle Ax ,  DP_s F_{nm}(x) \rangle | $ $
+ | \langle Ax ,  DP_s F_{n}(x) \rangle |$ $ \le c_T |Ax|,
$
$ n, \,  m \ge 1$. 
Therefore, for $t\in[0,T]$, as $m\rightarrow \infty$ and $n\rightarrow \infty$,
\begin{equation*}
\int_0^t  \langle A x ,  DP_s F_{nm}(x) \rangle \mbox{d}s \rightarrow \int_0^t 
\langle A x ,  DP_s F(x) \rangle \mbox{d}s.
\end{equation*}
In conclusion,  as $m \to \infty$ and $n \to \infty$ in 
\eqref{Peqfnm1}, we obtain the assertion.

\vv \text{\it III Step.} We prove \eqref{Peq53} when $f \in C^2_b(H).$

We consider a sequence of cylindrical functions $(f_n)$ which approximate $f$ as in Lemma \ref{uni1}. By the previous step we know that \eqref{Peq53} holds when $f$ is replaced by $f_n$, $n \ge 1$, i.e., 
$$
P_t f_n(x)= f_n(x) + \int_0^t \LO_0 (P_s f_n ) (x) \mbox{d}s, \,\,\,\,t\geq0, \;\;
x\in D(A).
$$
In order to pass to the limit as $n \to \infty$ we  proceed as in the previous step. The only difficulty concerns  the term 
$\frac{1}{2}\mbox{Tr}(QD^2 P_sf_{n}(x))$.
We have  to justify the following limit 
\begin{align} \label{new66}
&\frac{1}{2}\mbox{Tr}(QD^2 P_sf_{n}(x))= \frac{1}{2}\mbox{Tr}( Q
e^{sA^*}P_s D^2 f_{n}(x)e^{sA}) \rightarrow\\
 \nonumber &\rightarrow\frac{1}{2}\mbox{Tr}( 
Qe^{sA^*}P_s D^2 f(x)e^{sA})= \frac{1}{2}\mbox{Tr}(QD^2 P_sf(x)),
\end{align}
 as $n\rightarrow \infty$. To this purpose we use basic properties of 
trace class operators
(cf.  Appendix C in \cite{DZ})
$$
| \mbox{Tr}( Q 
e^{sA^*}P_s [ D^2 f_{n}(x) - D^2 f_{}(x)] e^{sA})|
= |\mbox{Tr}(e^{sA} Q 
e^{sA^*}P_s [ D^2 f_{n}(x) - D^2 f_{}(x)] )|
$$
$$
\le \|e^{sA} Q 
e^{sA^*} \|_{HS} \, \|  D^2 f_{n}(x) - D^2 f_{}(x) \|_{HS}
$$
which tends to 0 as $n \to \infty$. Note that we also have  the bound
$$
|\mbox{Tr}(QD^2 P_s f_{n}(x))| \le  |\mbox{Tr}( Q
e^{sA^*}P_s D^2 f_{n}(x)e^{sA})| \le C_T \sup_{y \in H}\| D^2 f_n(y)\|_{HS} \le M_T,
$$
 for any $t \in [0,T]$, $n \ge 1$.
The assertion (i) is proved.

\vv {\bf (ii)}
 Let us fix $x_0 \in H$. There exists a sequence $(x_n ) \subset D(A)$ such that $x_n \to x_0$.

According to (i) and \eqref{vi22}, we have, for any $n \ge 1$, $t \ge 0$, 
\begin{gather*}
P_t f(x_n) - f(x_n) =  \int_0^t {\cal L}_1 (P_s f ) (x_n) \mbox{d}s
+  \int_0^t   \langle A x_n ,  DP_s f(x_n) \rangle  \mbox{d}s
\\
=  \int_0^t {\cal L}_1 (P_s f ) (x_n) \mbox{d}s
+  \int_0^t   \langle  e^{sA} x_n ,   P_s (A^* Df) (x_n) \rangle  \mbox{d}s. 
\end{gather*} 
Since  $A^* Df : H \to H$ is bounded and continuous, the mapping:
$x \mapsto \LO_0 (P_t f ) (x)$ is  continuous, for any $t \ge 0$.  
 Hence  we can pass to  the limit as $n \to \infty$ in the previous identity  and get 
$$
P_t f(x_0) - f(x_0) =    \int_0^t {\cal L}_1 (P_s f ) (x_0) \mbox{d}s
+  \int_0^t   \langle  e^{sA} x_0 ,   P_s (A^* Df) (x_0) \rangle  \mbox{d}s. 
$$ 
This shows  that 
\eqref{Peq53} holds  for $x = x_0$.  
\end{proof}
To prove uniqueness 
for the Cauchy problem we need 
   It\^o's formula for OU processes and functions $f \in C^2_A(H)$.
 A special case of this formula which is enough for our  purposes   follows from  \cite{SZ}. 
\begin{lemma}
\label{it3}
Let us consider an OU process $(X_t^x)$ as in \eqref{22} and assume that the L\'evy measure $\nu$ has bounded support in $H$. Let $f \in C^2_A(H)$, we have (cf. (\ref{vi22}))
$$
f(X_t^x) - f(x) = \int_0^t \int_{ H} [ f(X_{s-}^x + y) -
f(X_{s-}^x)]
   \, \tilde N(ds, dy) 
+  \int_0^t \langle D f (X_r^x), dW_r^Q\rangle $$$$ 
 + \int_0^t   \LO_1 f(X_r^x)  dr
  +   
  \int_0^t \langle A^* D f(X_r^x), X_r^x \rangle dr,\;\;\; t \ge 0, \; x \in H.   
$$
\end{lemma}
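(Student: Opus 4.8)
The plan is to establish the Itô formula for $f \in C^2_A(H)$ by reducing to the finite-dimensional situation, where the classical Itô formula for semimartingales driven by L\'evy noise is available, and then passing to the limit. The crucial structural point is that for $f \in C^2_A(H)$ we may rewrite $\langle A x, Df(x)\rangle$ as $\langle A^* Df(x), x\rangle$ (see \eqref{fuy}), and $A^* Df$ is bounded and continuous by the definition of $C^2_A(H)$ in \eqref{ca1}; this is exactly what makes the drift term on the last line well-defined even though $x$ ranges outside $D(A)$ and $Ax$ itself is not available. The standing assumption that $\nu$ has bounded support removes all integrability issues coming from the large-jump part, so that $\LO_1 f$ is a genuinely bounded expression and the Poisson integral against the compensated measure $\tilde N$ is a true martingale.

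First I would fix the orthonormal basis $(e_k)$ and the finite-rank projections $P_n$ from \eqref{pr}, and approximate $f$ by the cylindrical functions $f_n(x) = f(P_n x)$ exactly as in Lemma \ref{uni1}, which supplies the uniform bound $M$ on $|f_n| + |Df_n| + \|D^2 f_n\|_{HS}$ together with the pointwise (and $L_2(H)$) convergence of the derivatives. The point of this reduction is that each $f_n$ depends only on finitely many coordinates, so applying Itô's formula to $s \mapsto f_n(X_s^x)$ is a finite-dimensional computation: one splits $Z$ via the L\'evy-It\^o decomposition \eqref{levyito1}, applies the standard Itô formula for the Wiener part and the small/large jump integrals, and identifies the drift. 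Here I would invoke \cite{SZ}, which yields the Itô formula in the form required for the cylindrical (equivalently finite-dimensional) functions $f_n$, producing the analogue of the claimed identity with $f$ replaced by $f_n$ and with the drift term $\int_0^t \langle A^* D f_n(X_r^x), X_r^x\rangle dr$ together with $\int_0^t \LO_1 f_n(X_r^x)\, dr$.

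The remaining work is to pass to the limit $n \to \infty$ in each of the four terms, taking expectations or using martingale/isometry estimates where convenient. For the jump integral against $\tilde N$ one uses the Itô isometry together with the bound on $|f_n|$ and dominated convergence (the support of $\nu$ being bounded and $\nu$ finite away from $0$ by \eqref{nuu}); for the stochastic integral $\int_0^t \langle Df_n(X_r^x), dW_r^Q\rangle$ one again uses the isometry and the uniform bound on $|Df_n|$ with the pointwise convergence $Df_n \to Df$; the term $\int_0^t \LO_1 f_n(X_r^x)\, dr$ converges by dominated convergence using the bound $\|\LO_1 f_n\|_0 \le C M$, which follows from the Taylor estimate on the small-jump part and boundedness of the remaining pieces. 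The main obstacle, and the step deserving the most care, is the drift term $\int_0^t \langle A^* D f_n(X_r^x), X_r^x\rangle\, dr$: one must check that $A^* Df_n(x) = A^* P_n Df(P_n x) \to A^* Df(x)$ in a way controllable along the trajectory. Because $P_n$ commutes appropriately with the basis and $A^* Df \in C_b(H,H)$, one rewrites $A^* D f_n(x) = P_n A^* P_n Df(P_n x)$ (or the analogous intertwining) and uses continuity of $A^* Df$ together with $P_n x \to x$; the integrand is then dominated by $c\,|X_r^x|$, which is integrable on $[0,t]$ since $\E \sup_{r \le t}|X_r^x| < \infty$ for L\'evy measure of bounded support, so dominated convergence applies. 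Assembling the four limits gives the stated identity for arbitrary $f \in C^2_A(H)$.
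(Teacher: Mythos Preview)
Your approach has a genuine gap in the treatment of the drift term. For the cylindrical approximants $f_n(x)=f(P_n x)$ from Lemma~\ref{uni1} one has $Df_n(x)=P_n Df(P_n x)$, and this vector lies in $D(A^*)$ only if the chosen basis vectors $e_1,\dots,e_n$ themselves lie in $D(A^*)$; nothing in the setup guarantees this (cf.\ the discussion in Remark~\ref{app}). Even if you select an orthonormal basis inside $D(A^*)$, the identity you invoke, $A^* Df_n(x)=P_n A^* P_n Df(P_n x)$, is false in general: it would require $A^* P_n$ to take values in the range of $P_n$, i.e.\ that $\text{span}\{e_1,\dots,e_n\}$ be $A^*$-invariant, which essentially forces the $e_k$ to be eigenvectors of $A^*$. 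Without such a commutation you are left with $A^* Df_n(x)=A^* P_n Df(P_n x)$, and since $A^*$ is unbounded there is no reason for $A^* P_n v \to A^* v$ even when $v\in D(A^*)$. Hence neither the well-posedness of the drift term for $f_n$ nor its convergence to $\int_0^t \langle A^* Df(X_r^x),X_r^x\rangle\,dr$ is justified by the argument you outline.

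The paper avoids this difficulty by appealing directly to \cite[Lemma~7.4]{SZ}, whose proof, as noted in the remark following the lemma, proceeds via the Yosida approximations $A_\lambda=\lambda A(\lambda-A)^{-1}$ of $A$ rather than via cylindrical approximations of $f$. With $A_\lambda$ bounded, the approximating OU process $X^{\lambda,x}$ is a genuine semimartingale and the classical It\^o formula applies; the drift reads $\langle A_\lambda X_r^{\lambda,x},Df(X_r^{\lambda,x})\rangle=\langle X_r^{\lambda,x},A_\lambda^* Df(X_r^{\lambda,x})\rangle$, and now one can pass to the limit because $A_\lambda^* v\to A^* v$ for every $v\in D(A^*)$ together with $A^* Df\in C_b(H,H)$. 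This is precisely the mechanism your scheme lacks.
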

\begin{proof} The assertion can be deduced by    \cite[Lemma 7.4]{SZ} using the decomposition formula 
\eqref{levyito1}.  Note that it  is a generalization of the finite-dimensional It\^o's formula proved  in 
  \cite[Section 4.4]{Applebaum}.
\end{proof}
 
\begin{remark} {\rm    The assumption that $\nu$ has bounded support can be removed in Lemma \ref{it3}. The proof of  this more general It\^o's formula is not difficult but it requires many details (the basic idea is  to   use  the Yosida approximations of $A$ as in 
the proof of  \cite[Lemma 7.4]{SZ}).} 
\end{remark}


Next we obtain well-posedness for the Cauchy problem \eqref{cau4}.  

\begin{theorem}\label{PPCAUCHY2} Let us consider the Cauchy problem \eqref{cau4} with $f\in C^2_A(H)$.
If we set $u(t,x)=P_tf(x)$, $t \ge 0,$ $x \in H,$ where $(P_t)$ is defined  in \eqref{ouuu}, then $u$ is the 
unique bounded classical solution to 
 \eqref{cau4}.
\end{theorem}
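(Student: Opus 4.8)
The plan is to reproduce, in the infinite-dimensional setting, the two-part structure of the proof of Theorem \ref{PPCAUCHY}: existence is obtained by taking $u(t,x) = P_t f(x)$ and reading off its regularity from Theorem \ref{PCAUCHYi}(ii), while uniqueness is a probabilistic argument based on the It\^o formula of Lemma \ref{it3}.

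\textbf{Existence.} I would set $u(t,x) = P_t f(x)$ and check the two defining conditions. Boundedness and joint continuity of $u$ on $E$ follow from $\|P_t f\|_0 \le \|f\|_0$ and \eqref{con22}. For the spatial regularity I would differentiate under the integral sign in \eqref{ouuu}: since $x \mapsto f(e^{tA}x+y)$ has gradient $e^{tA^*} Df(e^{tA}x+y)$ and Hessian $e^{tA^*} D^2 f(e^{tA}x+y)e^{tA}$, one obtains
$$
DP_t f(x) = e^{tA^*} P_t(Df)(x), \qquad D^2 P_t f(x) = e^{tA^*} P_t(D^2 f)(x)\, e^{tA},
$$
which are bounded in $H$ and in $L_2(H)$ respectively, uniformly on compact time intervals, because $(e^{tA})$, $(e^{tA^*})$ are locally bounded and $\|Df\|_0$, $\sup_x \|D^2 f(x)\|_{HS}$ are finite. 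As $f \in C^2_A(H)$, the integrand $Df(e^{tA}x+y)$ takes values in $D(A^*)$ with $A^* Df$ bounded continuous; closedness of $A^*$ then gives $P_t(Df)(x) \in D(A^*)$ and $A^* DP_t f(x) = e^{tA^*} P_t(A^* Df)(x)$, again bounded and continuous. Hence $u(t,\cdot) \in C^2_A(H)$ with all the required derivatives bounded and continuous on $E$. Finally, since $f \in C^2_A(H)$, Theorem \ref{PCAUCHYi}(ii) yields \eqref{Peq53} for every $x \in H$ together with continuity of $t \mapsto \LO_0(P_t f)(x)$, so by the fundamental theorem of calculus $u(\cdot,x) \in C^1([0,+\infty))$ with $\partial_t u(t,x) = \LO_0(P_t f)(x) = \LO_0 u(t,x)$, i.e. $u$ solves \eqref{cau4}.

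\textbf{Uniqueness.} Following the argument in Theorem \ref{PPCAUCHY}, I would fix $t>0$, set $v(s,x) = u(t-s,x)$ for $s \in [0,t]$, and apply It\^o's formula to $s \mapsto v(s,X_s^x)$ using the L\'evy--It\^o decomposition \eqref{levyito1}. Because $v(s,\cdot) \in C^2_A(H)$, the drift term equals $\partial_s v(s,X_s^x) + \LO_0 v(s,\cdot)(X_s^x) = -\partial_t u(t-s,X_s^x) + \LO_0 u(t-s,X_s^x)$, which vanishes identically since $u$ solves \eqref{cau4}. The surviving terms are a compensated-Poisson integral against $\tilde N$ and a stochastic integral against $W^Q$, and both have zero expectation since $v$ and $D_x v$ are bounded. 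Taking expectations and using $v(t,X_t^x) = f(X_t^x)$ and $v(0,x) = u(t,x)$ gives $\E[f(X_t^x)] = u(t,x)$, that is $u(t,x) = P_t f(x)$ for all $t \ge 0$, $x \in H$.

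\textbf{Main obstacle.} The delicate point is the time-dependent It\^o formula invoked above: Lemma \ref{it3} is stated only for time-independent $f \in C^2_A(H)$ and under the extra hypothesis that $\nu$ has bounded support. I would first remove the support restriction as indicated in the remark following Lemma \ref{it3} (via Yosida approximations of $A$, or by truncating $\nu$ and passing to the limit). To reach the time-dependent version I would partition $[0,t]$ and write each increment $v(s_{i+1},X_{s_{i+1}}^x) - v(s_i,X_{s_i}^x)$ as a purely temporal increment, controlled by $\partial_s v \in C_b$, plus a spatial increment handled by Lemma \ref{it3} applied to the frozen function $v(s_i,\cdot)$; letting the mesh tend to $0$ and using the uniform boundedness and continuity of $v$, $\partial_s v$, $D_x v$, $D^2_x v$ and $A^* D_x v$ yields the desired formula. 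One must keep in mind that, although $X_s^x \notin D(A)$ almost surely, the term $\langle A^* D_x v(s,X_s^x), X_s^x\rangle$ is well defined precisely because $v(s,\cdot) \in C^2_A(H)$ --- this is the role the space $C^2_A(H)$ plays throughout.
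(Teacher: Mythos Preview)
Your existence argument matches the paper's. For uniqueness, your overall strategy (freeze $t$, set $v(s,x)=u(t-s,x)$, apply It\^o, take expectations) is the same, but the way you deal with the restriction in Lemma \ref{it3} differs from the paper's.

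You propose to upgrade Lemma \ref{it3} itself --- first removing the bounded-support hypothesis on $\nu$ (as the remark after the lemma says is possible), then building a time-dependent version by a partition argument --- and only afterwards apply it to $v(s,X_s^x)$. The paper instead keeps Lemma \ref{it3} exactly as stated and truncates the \emph{process}: it introduces $Z^n = Z - U^n$ where $U^n$ collects the jumps of size larger than $n$, forms the associated OU process $X^n$ (whose L\'evy measure $\nu_n = \nu|_{\{|y|\le n\}}$ has bounded support), and applies It\^o to $v(s,X^n_s)$. Because $\LO_0$ is defined with the full $\nu$ while the generator of $X^n$ involves only $\nu_n$, an explicit correction term $\int_0^t dr\int_{\{|y|>n\}}[u(t-r,X^n_r+y)-u(t-r,X^n_r)]\,\nu(dy)$ appears. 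After taking expectations this term is bounded by $2t\|u\|_0\,\nu(\{|y|>n\})\to 0$, and $X^n_t\to X^x_t$ a.s.\ (from the explicit jump decomposition), so one passes to the limit at the level of identities between expectations rather than at the level of the It\^o formula.

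Both routes are correct. The paper's truncation buys you a self-contained argument that uses Lemma \ref{it3} only in the form actually proved, and the limit is taken after expectations, so no convergence of stochastic integrals is needed. Your route is conceptually cleaner once the general It\^o formula is in hand, and your partition argument for the time-dependent extension is a point the paper itself glosses over. One small remark on your version: the claim that the compensated-Poisson integral has zero expectation needs square-integrability of the integrand against $\nu$, which follows from $|u(t-r,z+y)-u(t-r,z)|\le \|D_xu\|_0\,|y|$ for $|y|\le 1$ together with $\int_{|y|\le1}|y|^2\nu(dy)<\infty$ and boundedness for $|y|>1$; you may want to say this explicitly.
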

\begin{proof}  \textit{Existence.} We know that
 $u(t,x)=P_tf(x)$  
is bounded and continuous on $[0, +\infty) \times H$ (see \eqref{con22}). 
  Moreover, differentiating under the integral sign,  it is straightforward to check that $u(t, \cdot ) \in C^2_A(H)$, $t \ge 0$, and 
$D_x u$, $A^* D_x u,$ $D^2_x u$ are bounded and continuous functions on  $[0,+\infty) \times H$.

From Theorem \ref{PCAUCHYi} we deduce that 
$u(\cdot, x)$ is a $C^1$-function on 
$[0, +\infty)$, for any $x \in H$, and 
finally that $u$  solves \eqref{cau4}.

\vv {\it Uniqueness.} We use It\^o's formula 
 as in the proof of 
Theorem \ref{PPCAUCHY}.

For any $n \ge 2$ we define  L\'evy processes $U^n = (U_t^n)$,
 and $Z^n = (Z_t^n)$, where $Z_t^n  = Z_t - U^n_t$ and 
$$
U_t^n = \int_0^t ds\int_{\{ |x| > n \} } x  N(ds, dx), \;\; t \ge 0
$$ 
(cf. \eqref{levyito1}). Now $U^n$ and $Z^n$ are independent L\'evy processes according to \cite[Lemma 4.24]{PZ} (moreover, $U^n$ is a compound Poisson process). By \cite[Lemma 4.25]{PZ} it is straightforward to prove that 
\begin{equation}
\label{act}
\E [e^{i \langle Z_t^n , h \rangle }]  = \exp ( - t \psi_n (h) ), \; h
\in H,
\end{equation} 
where  $\psi_n : H \to {\mathbb C} $ is given as follows:
\begin{equation*}
\label{infi1}
 \psi_n(h)= \frac{1}{2}\langle Q h,h  \rangle   - i \langle a, h\rangle
- \int_{ H } \Big(  e^{i \langle h,y \rangle }  - 1 - \, { i \langle h,y
\rangle} \, \mathds{1}_{\{ |y| \le 1\}} \, (y) \Big )\,   \nu_n (dy), 
\end{equation*}
 $h \in H$  (cf. \eqref{infi}); here  $\nu_n (A) = \nu (A \cap  \{ |y| \le n \} )$, 
 for any Borel set $A \subset H$  (i.e.,  $\nu_n (dy) = 
 \mathds{1}_{\{ |y| \le n\}}(y) \,   \nu (dy)$). In particular, $\nu_n$ has bounded support.
%

Let us fix $x \in H$ and $t>0$. According to page 84 in \cite{Applebaum} or  \cite[Section 1]{Ch} 
 the OU process  driven by $Z^n$ is given by   
 \begin {gather} 
 \label{d11}
 X^n_t  =  e^{tA} x + \int_0^t e^{(t-s)A}  dZ_s^n = e^{tA} x + Y_t 
 - \int_0^t e^{(t-s)A}  dU_s^n
\\ \nonumber
= X^x_t -  \int_0^t ds\int_{\{ |x| > n \} } e^{(t-s)A}x  N(ds, dx) 
 = X_t^x - \sum_{0 < s \le t } e^{(t-s)A} (\triangle Z_s) \, \mathds{1}_{\{ |\triangle Z_s| >n \}},
\end{gather}
 where $\triangle Z_s = Z_s - Z_{s-}$ ($Z_{s-}$ is the left limit in $s$) and the last term is a finite random sum. Clearly, for any $\omega \in \Omega$, $\big (\int_0^t e^{(t-s)A}  dU_s^n \big) (\omega)$ $\to 0$ as $n \to \infty$. It follows that 
\begin{equation}
\label{din}
X_t^n \to X_t^x \;\; \text{as $n \to \infty, \;\; $     $\P$- a.s.} 
\end{equation}
%
 Now
we apply    It\^o's  formula 
 as in Lemma \ref{it3} 
to $v (s, X_s^n)$, $s \in [0,t]$,
 where
  $v(s,x)= u(t-s,x)$,  $s \in [0,t]$, $x \in H$ (in the sequel we denote 
by  $\tilde N_n$  the compensated Poisson random measure of $Z^n$). We find
 \begin{align*}
& v(t,X_t^n) - v(0,x) = f(X_t^n) - u(t,x) 
\\
& = \int_0^t \int_{H}
 [ u(t-r,X_{r-}^n + y) -
u(t-r, X_{r-}^n)]
   \, \tilde N_n(dr, dy) 
\\ &
+  \int_0^t \langle D u(t-r , X_r^n), dW_r^Q\rangle   + \int_0^t \big (-\partial_s u(t-r , X_r^n) + \LO_0 u(t-r,X_r^n)\big) dr
\\
&  - \int_0^t dr \int_{ \{ |y|  >n \} } [ u (t-r, X_{r}^n + y) - u(t-r, X_{r}^n)]
   \nu (dy)
 \end{align*}
(recall  that  the L\'evy measure of $Z^n$ is $\nu_n (dy) =  \mathds{1}_{\{ |y| \le n\}}(y) \,   \nu (dy)$).
Since $\partial_s u -  \LO_0 u =0$, by taking the
expectation, we arrive at
 \begin{equation} \label{f77}
  \E[f(X_t^n)] = u(t,x) - \E \int_0^t dr \int_{ \{ |y|  >n \} } [ u (t-r, X_{r}^n + y) - u(t-r, X_{r}^n)]
   \nu (dy).
 \end{equation}
Note that
$$ \Big |\int_0^t dr \int_{ \{ |y|  >n \} } [ u (t-r, X_{r}^n + y) - u(t-r, X_{r}^n)]
   \nu (dy) \Big| \le  2  t  \| u\|_0 \nu ( \{|y| >n\})
$$ which tends to $0$ as $n \to \infty$. Passing to the limit in \eqref{f77}
we  get 
$
u(t,x) = $ $ \E [f(X_t^x)] =  P_t f(x)$ (see also \eqref{din}). 
This proves the  uniqueness.
\end{proof}

\subsection{The Ornstein-Uhlenbeck generator ${\cal L}$ 
in 
$C_b(H)$}

The contraction OU semigroup $(P_t)$ acting on $C_b(H)$ (i.e., $P_t : C_b (H) \to C_b (H)$, $t \ge 0$, and $\| P_t f  \|_0 \le  \| f\|_0$, $t \ge 0$, $f \in C_b(H)$) preserves the 
$\pi$-convergence  (i.e., for any $t \ge 0$, $ f_n 
\buildrel \pi \over 
\longrightarrow f$ implies $P_t f_n \buildrel \pi \over 
\longrightarrow P_t f$) and 
further, for any
 $f \in C_b(H)$, $x \in H,$ the mapping: 
$
t \mapsto P_t f(x)
$ 
is continuous from $[0, 
+\infty)$ into $\R$. 

Thus $(P_t)$ belongs to the class of  {\it $\pi$-semigroups of 
contractions} on 
$C_b(H)$ considered in   \cite{P}. Note that \cite{P}   mainly deals with $\pi$-semigroups 
on  $UC_b(H)$; however according to  \cite[Section 5]{P} all the results in \cite{P} can be easily extended to 
$\pi$-semigroups on  
$C_b(H)$.  
Clearly $(P_t)$ is also a 
 {\it stochastically 
continuous Markov semigroup} according to the definition given in \cite{Ma}. Indeed 
the OU semigroup is given by kernels of probability measures.
 
 As in  \cite{P}  and \cite{Ma}
we  define the {\it   generator}  ${\cal L} :$  $D({\cal L})$ 
$\subset \, { C}_b^{}(H)$ $\to $
   ${ C}_b^{}(H)$ for $(P_t)$ (we set  $\Delta _h  =$ $\frac {P_h \, - 
I}{h}$):
\begin{equation} \label{gene}
\left \{
\begin {array} {l}
\displaystyle {  D  ({\cal L})\, =\, \{ f \in  {C}_b (H)
\; \mbox {\rm such that $\exists g \in { C}_b (H)$  },\;\;
 \lim_{h \to 0^+} \frac {P_h f(x)\, - 
f(x)}{h} = g(x),}
\\
\text{for any $x \in H$, and } \;\; \sup_{h>0} \| P_h f \, - 
f\|_0 \, {h^{-1}} < \infty \};
 \\ 
\displaystyle {    {\cal L}f(x)  = \,  \lim_{h \to 0^+} \Delta_h
f(x) = g(x),\;\;\;  f \in D({\cal L}),\; x \in H.
} 
\end {array}
\right.
\end{equation}
\begin{remark}
\label{d33}
{  One  can give   equivalent definitions for the generator of the OU semigroup. According to  \cite[Proposition 3.6]{P} the operator  ${\cal L}$ in \eqref{gene} coincides with the generator considered in 
\cite{Ce} and \cite{CG} and 
 defined by the Laplace transform of the semigroup.
Moreover,  by \eqref{con33}  one can apply  \cite[Theorem 1.1]{P} and obtain 
\begin{equation*} 
\left \{
\begin {array} {l}
\displaystyle {  D  ({\cal L})\, =\, \{ f \in  {C}_b (H)
\; : \; \; \mbox {\rm  $\exists g \in { C}_b (H)$  },\;\;
 \lim_{h \to 0^+} \sup_{x \in K}\Big | \frac {P_h f(x)\, - 
f(x)}{h} - g(x) \Big|=0,}
\\
\text{for any compact set $K \subset  H$, and } \;\; \sup_{h>0} \| P_h f \, - 
f\|_0 \, {h^{-1}} < \infty \};
 \\ 
\displaystyle {    {\cal L}f(x)  = \,  \lim_{h \to 0^+} \Delta_h
f(x) = g(x),\;\;\;  f \in D({\cal L}),\; x \in H.
} 
\end {array}
\right.
\end{equation*}
 This shows that  ${\cal L} : D({\cal L}) \subset C_b(H) \to C_b(H)$ coincides with the generator considered in  \cite{GK}, \cite{Ku} and \cite{Applebaumart1}. In particular  \cite{GK} and \cite{Applebaumart1} use the  mixed topology  $\tau$ on $C_b(H)$. This is the finest locally convex topology on $C_b(H)$ which agrees on sup-norm bounded sets with the topology of the uniform convergence on compacts.
In \cite{Applebaumart1} several properties of 
the OU semigroup $(P_t)$ 
acting on  $(C_b (H), \tau)$ are established.
}
\end{remark}

\subsection {A $\pi$-core  for the  generator $\cal L$}

Generalizing \eqref{pii} we say that a $m$-indexed multisequence $(f_{n_1, \ldots, n_m})_{n_1, 
\ldots, n_m \in \N} $ $\subset C_b(H) $ $\pi$-converges to $f \in C_b(H) $ if
for any $i = 1, \ldots, m-1$ there exists an   $i$-indexed multisequence      
$(f_{n_1, \ldots, n_i})_{n_1, 
\ldots, n_i \in \N}$  $\subset C_b(H)$   such that, for $n_1, \ldots, n_i \in \N$, 
\begin{equation} \label{d45}
 f_{n_1, \ldots, n_{i+1}} \buildrel \pi \over \longrightarrow
f_{n_1, \ldots, n_i} \;\; \text{as} \;\; n_{i+1} \to \infty.
\end{equation}                                           
We write 
 $ \lim_{n_1 \to \infty} \ldots \lim_{n_m \to \infty} f_{n_1, \ldots, 
n_m }$ $ \buildrel \pi \over =
f$  or 
$ f_{n_1, \ldots, n_{m}} \buildrel \pi \over \longrightarrow
f$. 
 Following \cite{Ma} we say that $E \subset C_b(H)$ is {\it $\pi$-dense} if for any $f 
\in C_b(H)$ there exists an  
$m$-indexed multisequence $(f_{n_1, \ldots, n_m})_{n_1, 
\ldots, n_m \in \N}$  $\subset E$  such that 
 $ 
f_{n_1, \ldots, n_{m}} \buildrel \pi \over \longrightarrow
f.
$

Moreover a subspace $\D \subset D({\cal L})$ is a 
{\it $\pi$-core} for $\cal L$ if $\D$ is $\pi$-dense in $C_b(H)$ and, for any $f \in 
D({\cal L})$ there exists an  
$m$-indexed multisequence   ${(f_{n_1, \ldots, n_m})_{n_1 , 
\ldots, n_m \in \N} }$ $ \subset \D$  such that 
 $$ 
f_{n_1, \ldots, n_{m}} \buildrel \pi \over \longrightarrow
f , \;\;\;  {\cal L}f_{n_1, \ldots, n_{m}} \buildrel \pi \over \longrightarrow
{\cal L}f. 
$$
The next  result can be proved more generally for any   stochastically 
continuous Markov semigroup  acting on $C_b(H)$ (cf.  \cite[Proposition 2.11]{Ma}).
 It generalizes a  
 classical result in the theory of $C_0$-semigroups.
\begin{theorem} \label{coma} Let $(P_t)$ be the OU semigroup with generator ${\cal L}.$
 If a  subspace  $\D \subset D({\cal L})$ is $\pi$-dense in $C_b(H)$ and moreover
$P_t (\D) \subset \D$, $t \ge 0$, then $\D$ is a $\pi$-core for ${\cal L}$. 
\end{theorem}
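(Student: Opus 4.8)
The plan is to establish the $\pi$-analogue of the classical core criterion \cite[Proposition II.1.7]{EN}: $\pi$-density together with invariance forces $\D$ to be a $\pi$-core. In the $C_0$-setting the argument rests on the regularisation $f\mapsto\frac1t\int_0^tP_sf\,ds$, which lies in the graph-closure of an invariant dense subspace and tends to $f$ as $t\to0^+$. The obstruction here is that $\frac1t\int_0^tP_sf\,ds$ need not belong to $\D$. My device is to replace this integral by its Riemann sums
$$ R_{N,t}\,g\;=\;\frac1N\sum_{k=0}^{N-1}P_{kt/N}\,g,\qquad g\in C_b(H), $$
which do lie in $\D$: indeed $P_{kt/N}g\in\D$ by the invariance $P_s(\D)\subset\D$ (with $P_0g=g\in\D$), and $\D$ is a subspace. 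I would then produce the required approximants by letting $N\to\infty$, then the $\pi$-density index, and finally $t\to0^+$, reading the three passages as the successive limits of one multisequence in $\D$.

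Fix $f\in D({\cal L})$. By $\pi$-density I first choose a multisequence $(f_{\vec n})\subset\D$ with $f_{\vec n}\buildrel \pi \over \longrightarrow f$; passages in the index $\vec n$ below are understood to be carried out one index at a time, each being a single-index $\pi$-limit. For $t>0$ and $N\ge1$ set $g^{\,t}_{\vec n,N}:=R_{N,t}f_{\vec n}\in\D$. Since $f_{\vec n}\in\D\subset D({\cal L})$ and $(P_t)$ commutes with ${\cal L}$ on $D({\cal L})$ (the relevant $C_0$-facts carry over to $\pi$-semigroups, cf. \cite{P}), one has ${\cal L}g^{\,t}_{\vec n,N}=R_{N,t}({\cal L}f_{\vec n})$, so the same Riemann-sum structure governs both $g^{\,t}_{\vec n,N}$ and ${\cal L}g^{\,t}_{\vec n,N}$.

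I would then carry out the three nested $\pi$-limits. First, as $N\to\infty$ (innermost), continuity and boundedness of $s\mapsto P_sf_{\vec n}(x)$ and of $s\mapsto P_s({\cal L}f_{\vec n})(x)$ make the Riemann sums converge boundedly and pointwise, whence
$$ g^{\,t}_{\vec n,N}\buildrel \pi \over \longrightarrow\tfrac1t\!\int_0^t\!P_sf_{\vec n}\,ds,\qquad{\cal L}g^{\,t}_{\vec n,N}\buildrel \pi \over \longrightarrow\tfrac1t\!\int_0^t\!P_s({\cal L}f_{\vec n})\,ds=\tfrac1t\big(P_tf_{\vec n}-f_{\vec n}\big). $$
Next, as $\vec n\to\infty$, using that $P_s$ preserves $\pi$-convergence together with dominated convergence in $s$,
$$ \tfrac1t\!\int_0^t\!P_sf_{\vec n}\,ds\buildrel \pi \over \longrightarrow\tfrac1t\!\int_0^t\!P_sf\,ds,\qquad\tfrac1t\big(P_tf_{\vec n}-f_{\vec n}\big)\buildrel \pi \over \longrightarrow\tfrac1t\big(P_tf-f\big). $$
Finally, as $t\to0^+$ (outermost), continuity of $s\mapsto P_sf(x)$ gives $\frac1t\int_0^tP_sf\,ds\buildrel \pi \over \longrightarrow f$, while $f\in D({\cal L})$ gives $\frac1t(P_tf-f)\buildrel \pi \over \longrightarrow{\cal L}f$. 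Assembling these along the ordered index $(t_j,\vec n,N)$ with $t_j\downarrow0$ yields $g^{\,t_j}_{\vec n,N}\in\D$ with $g^{\,t_j}_{\vec n,N}\buildrel \pi \over \longrightarrow f$ and ${\cal L}g^{\,t_j}_{\vec n,N}\buildrel \pi \over \longrightarrow{\cal L}f$, which is exactly the $\pi$-core property.

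The one point that needs real care — and which I expect to be the main obstacle — is the uniform sup-bound demanded by $\pi$-convergence at each of the three levels, because the factor $t^{-1}$ in ${\cal L}g^{\,t}_{\vec n,N}=R_{N,t}({\cal L}f_{\vec n})$ and in $t^{-1}(P_tf-f)$ diverges as $t\to0^+$. The resolution is that $\pi$-convergence requires boundedness only over the index currently running, with the outer indices frozen: $\|R_{N,t}f_{\vec n}\|_0\le\|f_{\vec n}\|_0$ is bounded in $N$; $t^{-1}\|P_tf_{\vec n}-f_{\vec n}\|_0\le2t^{-1}\sup_{\vec n}\|f_{\vec n}\|_0$ is bounded in $\vec n$ for fixed $t$; and $t^{-1}\|P_tf-f\|_0$ is bounded in $t$ thanks to the defining estimate $\sup_{h>0}\|P_hf-f\|_0\,h^{-1}<\infty$ for $f\in D({\cal L})$. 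Hence the divergent factor is harmless, and the Riemann-sum regularisation is reconciled with the bounded-pointwise nature of $\pi$-convergence, completing the argument.
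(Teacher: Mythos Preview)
Your argument is correct. The paper itself does not supply a proof of this theorem: it merely states that the result ``can be proved more generally for any stochastically continuous Markov semigroup acting on $C_b(H)$'' and refers to \cite[Proposition 2.11]{Ma}. What you have written is precisely the natural adaptation of the classical invariant-core criterion \cite[Proposition II.1.7]{EN} to the $\pi$-setting, and it is essentially the argument one would find behind that citation.

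Two small remarks. First, the commutation ${\cal L}P_s g=P_s{\cal L}g$ for $g\in D({\cal L})$ that you invoke does hold for the $\pi$-generator; it follows from the identity $P_tg-g=\int_0^tP_s({\cal L}g)\,ds$ (cf. \cite[Proposition 3.2]{P}) applied to $P_rg$ in place of $g$. Second, the definition of $\pi$-convergence for multisequences requires only that the intermediate limits lie in $C_b(H)$, not in $\D$; your intermediate objects $\tfrac1t\int_0^tP_sf_{\vec n}\,ds$, $\tfrac1t(P_tf_{\vec n}-f_{\vec n})$, etc., are indeed in $C_b(H)$, so the nesting is legitimate. Your handling of the uniform bounds---in particular the observation that $\sup_{h>0}\|P_hf-f\|_0\,h^{-1}<\infty$ for $f\in D({\cal L})$ neutralises the factor $t^{-1}$ at the outermost level---is exactly the point that makes the $\pi$-version go through.
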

As in
\cite{GK} and \cite{Applebaumart1} and similarly 
to the space 
$\D_0$, we introduce
\begin{equation}
\label{corre}
{\tilde \D_0} = \{ f \in C^2_A (H) \; : \;x \mapsto \langle x , 
A^*Df(x) \rangle \in 
C_b(H)  \}.
\end{equation} 
 Note that if $f \in \tilde \D_0$ then ${\cal L}_0 f \in C_b(H)$.
Using also Theorem \ref{PPCAUCHY2}, we  prove that ${\tilde \D_0}$
 is an invariant $\pi$-core  for the OU semigroup.
We start with  a preliminary result.
\begin{proposition} \label{cor111} Let us consider the OU generator ${\cal L} $ given in \eqref{gene} and the operator ${\cal L}_0$ defined  in \eqref{fuy}. The following statements hold:

\vv (i)   $\tilde  \D_0 \subset D({\cal L})$ and ${\cal L} f = {\cal L}_0 
f$, 
for any $f \in {\tilde \D_0}$;

\vv (ii) ${\tilde \D_0}$ is invariant for $(P_t)$, i.e., $P_t ({\tilde \D_0}) \subset 
{\tilde \D_0}$, $t 
\ge 0$.
\end{proposition}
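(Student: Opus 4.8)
The plan is to establish the two claims of Proposition \ref{cor111} by reducing to the finite-dimensional results of Section 3.1, exactly as the infinite-dimensional Cauchy problem was handled in Theorem \ref{PCAUCHYi} and Theorem \ref{PPCAUCHY2}. The crucial bridge is formula \eqref{Peq53}, which for $f \in C^2_A(H)$ holds for \emph{all} $x \in H$ by part (ii) of Theorem \ref{PCAUCHYi}. Since the definition \eqref{corre} of $\tilde \D_0$ forces $f \in C^2_A(H)$ together with the extra integrability $x \mapsto \langle x, A^*Df(x)\rangle \in C_b(H)$, the observation recorded just before the proposition, namely that $f \in \tilde \D_0$ implies ${\cal L}_0 f \in C_b(H)$, tells us that $\LO_0 f$ is a genuine bounded continuous function and the right-hand side of \eqref{Peq53} makes sense classically.

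For part (i) I would argue as in the proof of Theorem \ref{cor1}(i). Fix $f \in \tilde \D_0$. From \eqref{Peq53} one has, for every $x \in H$ and $t > 0$,
\begin{equation*}
\frac{P_t f(x) - f(x)}{t} - \LO_0 f(x) = \frac{1}{t}\int_0^t \big[\LO_0 (P_s f)(x) - \LO_0 f(x)\big]\,\mathrm{d}s.
\end{equation*}
By part (i) of Theorem \ref{PCAUCHYi} the integrand tends to $0$ as $s \to 0^+$ (continuity of $s \mapsto \LO_0(P_s f)(x)$ at $s=0$ with the stated limit $\LO_0 f(x)$), so the pointwise limit as $t \to 0^+$ is $0$ for each $x$. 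To conclude $f \in D({\cal L})$ in the sense of \eqref{gene} I must additionally verify the uniform bound $\sup_{h>0}\|P_h f - f\|_0\, h^{-1} < \infty$; this follows because $\LO_0 f \in C_b(H)$ is bounded and \eqref{Peq53} gives $\|P_h f - f\|_0 \le h\,\|\LO_0 f\|_0$. Hence $f \in D({\cal L})$ and ${\cal L}f = \LO_0 f$.

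For part (ii) the strategy mirrors Theorem \ref{cor1}(ii). First one checks by differentiating under the integral sign in \eqref{ouuu} that $P_t$ maps $C^2_A(H)$ into itself, using the identities $DP_t f(x) = e^{tA^*} P_t(Df)(x)$ and $D^2 P_t f(x) = e^{tA^*} P_t(D^2 f)(x) e^{tA}$, so $A^* D P_t f = A^* e^{tA^*} P_t(Df)$ remains bounded and continuous. It then remains to show the key membership $x \mapsto \langle x, A^* D P_t f(x)\rangle \in C_b(H)$. The idea is to combine \eqref{Peq53} with part (i): since ${\cal L} f = \LO_0 f$ and ${\cal L}$ commutes with $P_t$ on its domain (a general semigroup fact), one gets $\LO_0 P_t f = P_t \LO_0 f \in C_b(H)$, whence $\LO_0 P_t f$ is bounded. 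Because $\LO_0 P_t f = \LO_1 P_t f + \langle \cdot, A^* D P_t f(\cdot)\rangle$ and $\LO_1 P_t f \in C_b(H)$ (the jump-plus-Gaussian part is bounded since $P_t f \in C^2_b(H)$), one isolates the drift term and concludes it lies in $C_b(H)$, i.e. $P_t f \in \tilde \D_0$.

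The main obstacle I anticipate is the commutation step $\LO_0 P_t f = P_t \LO_0 f$ and the requisite regularity needed to isolate the drift term. In the $C_0$-semigroup setting of Theorem \ref{cor1} this came cleanly from $\LO_0 P_s f = P_s \LO_0 f$ together with continuity in $s$ of both sides; here one works with the $\pi$-semigroup generator \eqref{gene} rather than a norm-continuous one, so I must be careful that the semigroup/generator commutation is valid in the pointwise sense, relying on part (i) and on \eqref{Peq53} applied to both $f$ and $P_t f$ (both of which lie in $C^2_A(H)$). Verifying $\LO_1 P_t f \in C_b(H)$ — controlling the large-jump integral $\int_{\{|y|>1\}}(P_t f(x+y) - P_t f(x))\,\nu(\mathrm{d}y)$ uniformly and continuously in $x$ — is routine since $\nu$ is finite there and $P_t f$ is bounded continuous, but it is the place where boundedness of $\LO_1 P_t f$ (as opposed to the potentially linearly growing drift) is genuinely used.
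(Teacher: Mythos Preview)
Your argument for part (i) has a genuine gap at the uniform bound. Formula \eqref{Peq53} reads
\[
P_h f(x)-f(x)=\int_0^h \LO_0(P_s f)(x)\,\mathrm{d}s,
\]
and here the integrand is $\LO_0(P_s f)(x)$, \emph{not} $P_s(\LO_0 f)(x)$. The drift part of $\LO_0(P_s f)(x)$ is $\langle x, A^* D P_s f(x)\rangle = \langle e^{sA}x, P_s(A^*Df)(x)\rangle$, which in general grows like $|x|$; so \eqref{Peq53} does \emph{not} give $\|P_h f-f\|_0 \le h\,\|\LO_0 f\|_0$ as you claim. Your pointwise limit is fine, but the condition $\sup_{h>0}\|P_h f-f\|_0\,h^{-1}<\infty$ in the definition \eqref{gene} is left unverified. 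The paper fills this by first establishing the companion formula \eqref{f55},
\[
P_t f(x)=f(x)+\int_0^t P_r(\LO_0 f)(x)\,\mathrm{d}r,
\]
via It\^o's formula (Lemma \ref{it3}) applied to the large-jump-truncated processes $X^n$, exactly the approximation scheme you already used in Theorem \ref{PPCAUCHY2}. Since $\LO_0 f\in C_b(H)$, the integrand here is bounded by $\|\LO_0 f\|_0$ uniformly in $x$, and the required estimate follows immediately.

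Once \eqref{f55} is in hand, your outline for part (ii) is essentially the paper's own: compare \eqref{Peq53} and \eqref{f55} (both integrate to $P_t f-f$, with integrands continuous in $s$) to obtain $\LO_0 P_t f = P_t \LO_0 f$, and then isolate the drift term from $\LO_1 P_t f\in C_b(H)$. Note also that your commutation argument for (ii), as written, relies on (i) to know $f\in D(\LO)$; so the missing \eqref{f55} propagates there as well.
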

\begin{proof} {\bf (i)} This assertion follows from  
\cite[Theorem 4.2]{Applebaumart1} (see also Remark \ref{d33}). 
Moreover, by (i) we deduce  (see \cite{Applebaumart1} or  \cite[Proposition 3.2]{P})
\begin{equation} \label{f55}
 P_t f(x)= f(x) + \int_0^t P_r \LO_0  f  (x) \mbox{d}r,\;\;\; x \in H.
\end{equation}
Alternatively, to prove (i) one can first establish  \eqref{f55} 
using Lemma \ref{it3} and arguing  as in the final part of the proof of Theorem \ref{PPCAUCHY2}. Indeed we have, for $f \in \tilde \D_0 $, $t>0$, $x \in H$,
  \begin{equation} 
\label{1244}
 \E [ f(X_t^n)] - f(x)   =   \int_0^t  \E [ \LO_0 f(X_r^n) ] dr
 - \int_0^t dr \int_{ \{ |y|  >n \} } \E [ f (X_{r}^n + y) - 
f(X_{r}^n)]
   \nu (dy)
 \end{equation}
(the process $(X_t^n)$ is defined in \eqref{d11}, $n \ge 1$). Since $\LO_0 f \in C_b(H)$ we have $\E [ \LO_0 f(X_r^n) ] $ $\to \E [ \LO_0 f(X_r^x) ]$, as $n \to \infty$, $r \in [0,t]$ (cf. \eqref{din}).  Passing to the  limit in \eqref{1244}
as $n \to \infty$ we obtain \eqref{f55}.
 Once \eqref{f55} is proved one can proceed as in the proof of  
  Theorem \ref{cor1} (see \eqref{tre}) and get (i).

\vv {\bf (ii)} Differentiating under the integral sign we find  $P_t (C^2_b (H)) \subset C^2_b (H)$, $t \ge 0$. Moreover, 
 we have easily $A^* D P_t f \in C_b(H,H)$, for $f \in \tilde \D_0$, $t \ge 0$.

Thus, 
to prove (ii),   it is enough to show that 
for $f \in \tilde \D_0$, we have that the map:
\begin{equation}
\label{inv44}
 x \mapsto  \langle  x , A^* DP_t f(x)\rangle \in C_b (H), \; \; t \ge 0. 
\end{equation} 
To check  this we use Theorem \ref{PCAUCHYi} and 
  \eqref{f55}. 
 We obtain for $f \in \tilde \D_0$, $x \in H,$ $t 
\ge 0,$
$$
 \int_0^t P_s \LO_0  f  (x) \mbox{d}s = \int_0^t \LO_0 P_s    f  (x) 
\mbox{d}s.
$$
Since $s \mapsto \LO_0 P_s    f  (x) $ and $s \mapsto P_s \LO_0     f  (x)$ are both 
continuous functions we get 
$$
\LO_0 P_t    f = P_t \LO_0    f,  \;\;  t \ge 0.
$$
Let us fix $t>0$. The previous identity shows that   $\LO_0 P_t    f
 \in C_b (H)$ since $\LO_0 f \in C_b(H)$.  
 We have (see \eqref{fuy})
$
 \LO_0 P_t    f(x) $ $ = \LO_1 P_t    f(x)$ $ + \langle x, A^* D P_t f(x) \rangle,$ $ x 
\in H.
$
Since  $\LO_1 P_t    f \in C_b (H)$ 
it follows  
 that  $\langle \cdot \, , A^* D P_t f( \cdot) \rangle$ $\in C_b (H)$, $t \ge 0$. 
\end{proof}   

We need to introduce a space $\D_1$ which  is similar to the space $I_A(H) $ used in \cite{Ma} (see also Remark 2.25 in \cite{D}). 

\noindent $\D_1$ is the linear span of the real and imaginary parts of the maps
$\phi_{a,h} : H \to {\mathbb C}$,  
\begin{equation}
x \mapsto \phi_{a,h}(x) =\int_0^a P_s \big( e^{i \langle \cdot, h \rangle} \big)(x) ds 
=\int_0^a  e^{i \langle e^{sA}  x, h\rangle}\,
 e^{- \int_0 ^s \psi (  e^{r A^*}  h) dr } ds, 
\end{equation} 
where $h \in D(A^*)$, $a>0$ (cf. \eqref{cd}).

\begin{proposition}
\label{e33} Let ${\cal L}$ be the OU generator.  The following statements hold: 

\vv (i) $\D_1 \subset \tilde \D_0 \subset D({\cal L})$ and ${\cal L} f = {\cal L}_0 f$, $f \in \D_1$;

\vv (ii) $\D_1$ is invariant for the OU semigroup $(P_t)$, i.e., $P_t ({ \D_1}) \subset 
{ \D_1}$, $t 
\ge 0$;

\vv (iii) the space $\D_1$ is $\pi$-dense in $C_b(H)$.
\end{proposition}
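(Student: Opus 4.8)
The plan is to derive (i) from Proposition~\ref{cor111} by checking that the real and imaginary parts of each generating function lie in $\tilde\D_0$; once $\D_1\subset\tilde\D_0$ is known, the inclusion $\tilde\D_0\subset D(\LO)$ and the identity $\LO f=\LO_0 f$ come for free from Proposition~\ref{cor111}(i). Writing $g_h(x)=e^{i\langle x,h\rangle}$, formula \eqref{cd} gives $\phi_{a,h}(x)=\int_0^a P_sg_h(x)\,ds$ with $P_sg_h(x)=e^{i\langle x,e^{sA^*}h\rangle}\hat\mu_s(h)$ and $|P_sg_h(x)|\le 1$. Differentiating under the integral sign,
\begin{equation*}
D\phi_{a,h}(x)=\int_0^a i\,\hat\mu_s(h)\,e^{i\langle x,e^{sA^*}h\rangle}\,e^{sA^*}h\,ds ,
\end{equation*}
and since $h\in D(A^*)$ the integrand takes values in $D(A^*)$ with $A^*$-image $s\mapsto i\hat\mu_s(h)e^{i\langle x,e^{sA^*}h\rangle}e^{sA^*}A^*h$, continuous and bounded on $[0,a]$; closedness of $A^*$ then lets me pull $A^*$ through the integral, so $D\phi_{a,h}(x)\in D(A^*)$ and $A^*D\phi_{a,h}$ is bounded and continuous. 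The second derivative is an integral of rank-one operators of Hilbert--Schmidt norm $|e^{sA^*}h|^2$, whence $D^2\phi_{a,h}:H\to L_2(H)$ is bounded and continuous and $\phi_{a,h}\in C^2_A(H)$.

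The remaining and delicate requirement for $\tilde\D_0$ is that $x\mapsto\langle x,A^*D\phi_{a,h}(x)\rangle$ be bounded, which is not obvious because the integrand $i\hat\mu_s(h)e^{i\langle x,e^{sA^*}h\rangle}\langle x,e^{sA^*}A^*h\rangle$ grows linearly in $x$. I would remove this growth by noticing that $\frac{\partial}{\partial s}e^{i\langle x,e^{sA^*}h\rangle}=i\langle x,e^{sA^*}A^*h\rangle\,e^{i\langle x,e^{sA^*}h\rangle}$ and integrating by parts in $s$, using $\frac{d}{ds}\hat\mu_s(h)=-\psi(e^{sA^*}h)\hat\mu_s(h)$ from \eqref{cd}; this produces
\begin{equation*}
\langle x,A^*D\phi_{a,h}(x)\rangle=P_ag_h(x)-g_h(x)+\int_0^a\psi(e^{sA^*}h)\,P_sg_h(x)\,ds ,
\end{equation*}
a bounded continuous function since $s\mapsto\psi(e^{sA^*}h)$ is continuous on the compact $[0,a]$ and $|P_sg_h|\le 1$. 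This is precisely the point where no first moment of $\nu$ is needed: the linear term is absorbed into the derivative structure rather than estimated. Hence $\phi_{a,h}\in\tilde\D_0$ and (i) follows.

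For (ii) I would invoke the semigroup law. Because $|P_sg_h|\le 1$, Fubini together with $P_tP_s=P_{t+s}$ gives $P_t\phi_{a,h}(x)=\int_0^a P_{t+s}g_h(x)\,ds=\int_t^{t+a}P_rg_h(x)\,dr=\phi_{t+a,h}(x)-\phi_{t,h}(x)$ for $t>0$ (and $\phi_{a,h}$ itself for $t=0$). Taking real and imaginary parts and using linearity shows $P_t(\D_1)\subset\D_1$.

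For (iii) the plan has two stages. First I would show that every $g_h$, $h\in H$, lies in the multisequential $\pi$-closure of $\D_1$: for $h\in D(A^*)$ one has $\tfrac1a\phi_{a,h}\buildrel\pi\over\longrightarrow g_h$ as $a\to0^+$, because $\tfrac1a\int_0^a P_sg_h(x)\,ds\to P_0g_h(x)=g_h(x)$ with modulus $\le 1$; then for arbitrary $h\in H$ I choose $h_k\to h$ in the dense domain $D(A^*)$, so $g_{h_k}\buildrel\pi\over\longrightarrow g_h$. Consequently the linear span $V(H)$ of the real and imaginary parts of $\{g_h:h\in H\}$ sits in the $\pi$-closure of $\D_1$. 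Second, $V(H)$ is itself $\pi$-dense in $C_b(H)$ by the standard cylindrical/trigonometric approximation (reduce $f\in C_b(H)$ to the cylindrical functions $f\circ P_n$ as in Lemma~\ref{uni1} and approximate those in the $\pi$-sense, cf.\ \cite[Proposition~4.2]{Ma} and \cite[Proposition~2.67]{D}). Since iterating multisequences again yields a multisequence, combining the two stages gives the $\pi$-density of $\D_1$. I expect the boundedness step of (i), i.e.\ the integration-by-parts identity above, to be the genuine obstacle, the linear-growth term being exactly what the absence of \eqref{add1} threatens; the density input in (iii) is classical and can be quoted.
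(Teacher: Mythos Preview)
Your proposal is correct and follows essentially the same route as the paper: for (i) you verify $\phi_{a,h}\in\tilde\D_0$ via the same integration-by-parts identity $\langle x,A^*D\phi_{a,h}(x)\rangle=P_ag_h(x)-g_h(x)+\int_0^a\psi(e^{sA^*}h)P_sg_h(x)\,ds$, for (ii) you use the semigroup law to get $P_t\phi_{a,h}=\phi_{t+a,h}-\phi_{t,h}$, and for (iii) you combine $\tfrac{1}{a}\phi_{a,h}\buildrel\pi\over\longrightarrow g_h$ with the $\pi$-density of trigonometric functions, exactly as the paper does (the paper quotes directly that the span of $g_h$ with $h\in D(A^*)$ is already $\pi$-dense, so your detour through general $h\in H$ is unnecessary but harmless).
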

\begin{proof}
{\bf (i)} By Proposition \ref{cor111} we have only to prove  that $\D_1 \subset \tilde \D_0$. 
If $I_1(\phi_{a,h}) $ and $I_2 (\phi_{a,h})$ denote respectively the real and imaginary part of $\phi_{a,h}$, it is enough to show that $I_j (\phi_{a,h}) \in \D_1$, $j=1,2.$

We have first to prove that $ I_j (\phi_{a,h}) \in C^2_A (H)$ (see \eqref{corre}). 
If we fix $a>0$ and $h \in D(A^*)$ we can  compute, for $x \in H$, $k \in H,$
$$
\langle D \phi_{a,h} (x), k \rangle  = \int_0^a \langle D [ P_s \big( e^{i \langle \cdot, h \rangle} \big)](x), k\rangle  ds = i \int_0^a   \langle e^{sA^*} h, k\rangle \, P_s \big( e^{i \langle \cdot, h \rangle} \big)(x)  ds;
$$
 it follows
 that $D I_j(\phi_{a,h}) \in C_b(H,H)$, $j =1,2$. 
Moreover,  $ D I_j(\phi_{a,h} (x)) \in D(A^*) $, $x\in H$, and 
 $A^*  D \phi_{a,h} (x) $ $ = i \int_0^a    e^{sA^*} A^* h \, P_s \big( e^{i \langle \cdot, h \rangle} \big)(x)  ds$; we deduce that
 $$ A^*D I_j (\phi_{a,h}) \in C_b(H,H), \;\;\;j=1,2.$$
Since
$$
\langle D^2 \phi_{a,h} (x) k', k \rangle  =-  \int_0^a   \langle e^{sA^*} h, k\rangle \,  \langle e^{sA^*} h, k'\rangle  \, P_s \big( e^{i \langle \cdot, h \rangle} \big)(x)  ds, \;\; k,\, k' \in H,
$$
using an orthonormal basis $(e_k)$, we find that $D^2 I_j (\phi_{a,h}) (x)$
 is a Hilbert-Schmidt operator for $j=1,2$, $x \in H$. Moreover, 
 for $x,y \in H$, 
\begin{gather*}
\| D^2 I_j (\phi_{a,h}) (x) - D^2 I_j (\phi_{a,h}) (y) \|^2_{HS} \\ \le  C_a \int_0^a  |h|^2 |e^{sA} (x-y)|^2  \sum_{j,k =1}^{\infty}   (\langle e^{sA^*} h, e_k \rangle \,  \langle e^{sA^*} h, e_j\rangle )^2 
\,
ds \\ \le 
c_a |h|^2 |x-y|^2 \int_0^a | e^{sA^*} h|^4 \, ds.
\end{gather*}
Using also the previous formula we obtain that $D^2 I_j (\phi_{a,h}) : H  \to  L_2(H)$ is bounded and continuous, $j =1,2$. This shows that $ I_j (\phi_{a,h}) \in C^2_A(H)$. 

To finish the proof it remains to prove that 
 $x \mapsto  \langle x , 
A^* D I_j (\phi_{a,h}) (x) \rangle \in 
C_b(H)$, $j =1,2$. 
We have, integrating by parts,
\begin{gather*}
\langle A^* D \phi_{a,h} (x), x \rangle  = i \int_0^a   \langle e^{sA^*} A^* h, x\rangle \, P_s \big( e^{i \langle \cdot, h \rangle} \big)(x)  ds
\\
=  i \int_0^a   \langle e^{sA^*} A^* h,  x\rangle \,   e^{i \langle   x, e^{sA^*} h\rangle}\,
 e^{- \int_0 ^s \psi (  e^{r A^*}  h) dr } ds 
=
  \int_0^a    \frac{d}{ds} \big [ e^{i \langle   x, e^{sA^*} h\rangle} \big]\,
 e^{- \int_0 ^s \psi (  e^{r A^*}  h) dr } ds
\\
= e^{i \langle e^{a A}  x, h\rangle} e^{- \int_0^a \psi (  e^{r A^*}  h) dr }  -  e^{i \langle  x, h\rangle} +  
\int_0^a   e^{i \langle e^{sA}  x, h\rangle} \,
 e^{- \int_0 ^s \psi (  e^{r A^*}  h) dr }
 \psi (  e^{s A^*}  h) ds.
\end{gather*}
This shows that $x \mapsto \langle A^* D \phi_{a,h} (x), x \rangle$
is a bounded and continuous function. 
The assertion follows easily.

\vv {\bf (ii)} Let us fix $t>0$. We prove that $P_t (\D_1) \subset \D_1$. We have, for $a>0,$ $x \in H,$ $h \in D(A^*)$, by using the semigroup law, 
\begin{gather} \label{al4}
P_t \phi_{a,h}(x) = \int_0^a P_{s+t} \big( e^{i \langle \cdot, h \rangle} \big)(x) ds  
= \int_0^{a+t} P_{r} \big( e^{i \langle \cdot, h \rangle} \big)(x) dr
- \int_0^{t} P_{r} \big( e^{i \langle \cdot, h \rangle} \big)(x) dr. 
\end{gather}
 Hence $P_t  \phi_{a,h}(x) =  \phi_{a+t,h}(x)  -  \phi_{t,h}(x) $.
 This shows the assertion.

\vv {\bf (iii)} One can follow  the proof  of  \cite[Proposition 4.4]{Ma}. This is based on two facts. The first one is  that 
 the linear span of all real and imaginary parts of functions
 $e^{i \langle \cdot, h\rangle }$, $h \in D(A^*)$, is $\pi$-dense in $C_b(H)$ (see   \cite[Proposition 2.37]{D} or   \cite[Proposition 4.2]{Ma}). The second fact is that ${n} \, \phi_{\frac{1}{n}, h } (x)
$ converges to $e^{i \langle x, h\rangle }$ as $n \to \infty,$ $x \in H$, $h \in D(A^*)$.
 \end{proof}
Since by Propositions \ref{cor111} and \ref{e33} the spaces $\tilde \D_0$  and $\D_1$ both satisfy the assumptions of Proposition \ref{coma} we obtain 
\begin{corollary} \label{end} The spaces $\tilde \D_0$ and $\D_1$ are both $\pi$-cores for the OU generator $\cal L$ with $\D_1 \subset \tilde \D_0 $. They are also invariant for the OU semigroup $(P_t)$ and,  for any $f \in \tilde \D_0$, one has
$
{\cal L} f = {\cal L}_0 f.
$
\end{corollary}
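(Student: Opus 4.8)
The plan is to assemble the ingredients already established in Propositions \ref{cor111} and \ref{e33} and feed them into the general $\pi$-core criterion of Theorem \ref{coma}. First I would record the chain of inclusions $\D_1 \subset \tilde \D_0 \subset D({\cal L})$: the first containment is part of Proposition \ref{e33}(i) and the second is Proposition \ref{cor111}(i), which moreover gives ${\cal L} f = {\cal L}_0 f$ for every $f \in \tilde \D_0$ (hence, a fortiori, for every $f \in \D_1$). This already settles two of the assertions of the statement, namely that $\D_1 \subset \tilde \D_0$ and that ${\cal L}$ coincides with ${\cal L}_0$ on $\tilde \D_0$.

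Next I would verify that each of the two spaces satisfies the two hypotheses of Theorem \ref{coma}, that is, $\pi$-density in $C_b(H)$ together with invariance under $(P_t)$. Invariance is immediate from the preceding results: $P_t(\tilde \D_0) \subset \tilde \D_0$ is Proposition \ref{cor111}(ii) and $P_t(\D_1) \subset \D_1$ is Proposition \ref{e33}(ii). For $\pi$-density, Proposition \ref{e33}(iii) already asserts that $\D_1$ is $\pi$-dense in $C_b(H)$. The one point deserving an explicit remark is the transfer of this property to the larger space $\tilde \D_0$: since $\D_1 \subset \tilde \D_0$, any $m$-indexed multisequence in $\D_1$ that $\pi$-converges to a given $f \in C_b(H)$ also lies in $\tilde \D_0$, so $\tilde \D_0$ is $\pi$-dense as well.

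With both hypotheses checked, I would invoke Theorem \ref{coma} separately with $\D = \tilde \D_0$ and with $\D = \D_1$ to conclude that each is a $\pi$-core for ${\cal L}$, which together with the inclusion and the identification ${\cal L} f = {\cal L}_0 f$ on $\tilde \D_0$ completes the proof. I do not anticipate any genuine obstacle: the corollary is essentially a packaging of the two preceding propositions through Theorem \ref{coma}, and the only step that is not a direct citation is the elementary observation that $\pi$-density propagates upward along the inclusion $\D_1 \subset \tilde \D_0$.
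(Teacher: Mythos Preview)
Your proposal is correct and matches the paper's own argument: the corollary is stated immediately after the sentence ``Since by Propositions \ref{cor111} and \ref{e33} the spaces $\tilde \D_0$ and $\D_1$ both satisfy the assumptions of [Theorem] \ref{coma} we obtain\ldots'', and your write-up simply unpacks that sentence. The one step you make explicit that the paper leaves implicit---that $\pi$-density of $\D_1$ passes up to $\tilde \D_0$ via the inclusion---is the obvious observation needed to apply Theorem \ref{coma} to the larger space.
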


\begin{remark}\label{app} In    \cite[Theorem 4.5]{GK} and  \cite[Theorem 5.2]{Applebaumart1} the authors consider the space  
 ${\cal F  C}^2_A (H) \subset C^2_b(H)$ of all
{ cylindrical 
functions} $f :  H \to \R $ such that  there exists $n \in \N$, $h_1, \ldots, h_n \in D(A^*)$ and $g 
\in C^2_b(\R^n)$ with 
\begin{equation}
\label{cil1}
f (x) = g(\langle x, 
h_1\rangle, \ldots, \langle x, 
h_n\rangle),\;\;\; x \in H, 
\end{equation} 
and moreover the map: $ x \mapsto \langle A^* D f(x), x \rangle  \in C_b(H)$.  In \cite{GK} and  in \cite{Applebaumart1} it is stated that ${\cal F  C}^2_A (H)$ is a core for the OU generator ${\cal L}$ with respect to the mixed topology  
(cf. Remark \ref{d33}); for this result \cite{GK} considers the case of  Gaussian OU processes  and  \cite{Applebaumart1} assumes that $\int_{\{ |y| >1 \}} |y|\nu(dy) < \infty$. However it seems that this result requires additional assumptions on $A.$  To this purpose we only note that  even if $g \in C^2_K(\R)$ and $h \in D(A^*)$,  then in general 
 the cylindrical function $f(x) = g(\langle x , h\rangle)$, $x \in H,$ does not belong to ${\cal F  C}^2_A (H)$. 
As in \cite{LR} a  sufficient  condition in order  
that ${\cal F  C}^2_A (H)$ is a core is that there exists in $H$ an orthonormal basis ${\cal B}$  of eigenvectors of  $A^*$;  in this case ${\cal F  C}^2_A (H)$ is a core if 
 in the previous definition  we add the condition that  the  vectors  $h_1, \ldots , h_n $ in  \eqref{cil1} belong to $\cal B$
(cf. the proof of Theorem 4.5 in \cite{GK}).
 \end{remark}

\begin{remark}\label{mehler} As  an extension of the OU semigroups one can consider the     generalised Mehler
 semigroups (see \cite{BRS}, \cite{FR}, \cite{PZ1}, \cite{W}). A generalised Mehler semigroup  $(S_t)$,  acting on ${ C}_b (H) $, is given by
  \begin{equation*}\label{st1}
 S_t f(x) = \int_H f( e^{tA}x + y  ) \mu_t (dy),\;\; t \ge 0, \,
x\in H, \, f \in { C}_b (H),
 \end{equation*}
 where $(e^{tA})$ is a ${C}_0$-semigroup on $H$, with generator
  $A$,   $\mu_t$, $t \ge 0$, is a given family of probability measures on $H$, such 
that
 $ \hat \mu_t (h)= \exp \Big(- \int_0^t
\psi (e^{sA^*} h ) ds \Big),$ $ h \in H,$ $  t \ge 0.$
 Here, $\psi: H \to C $ is a (norm) continuous, negative definite function
 with $\psi (0)=0$. Moreover, we require that  $t \mapsto \mu_t$ is continuous on $[0, + \infty)$ with respect to the weak topology  of measures (cf. Lemma 2.1 in \cite{BRS}). 

One can define a generator ${\cal L}$ for $(S_t)$ 
 with $D({\cal L}) \subset C_b(H)$ as in \eqref{gene} and an associated  subspace $\D_1$ 
 (defined with $A$ and $\psi$  given before). Arguing as in \cite[Proposition 4.4]{Ma} and using \eqref{al4} one  shows that $\D_1 \subset D(\cal L)$  and 
$$
{\cal L} \phi_{a,h}(x) =  e^{i \langle e^{aA}  x, h\rangle} e^{- \int_0^a \psi (  e^{r A^*}  h) dr }  -  e^{i \langle  x, h\rangle}, \;\; x\in H, \, h \in D(A^*), \, a>0.  
$$
  Moreover,           $\D_1$ is  an invariant $\pi$-core   for $(S_t)$. 
To prove this fact first we argue as in the proof of Proposition \ref{e33}
 and establish that $S_t (\D_1) \subset \D_1$, $t \ge 0$; then 
we apply Proposition \ref{coma} to $(S_t)$ and $\D_1$.
\end{remark}

\section{
Kolmogorov equations for measures}
Following  \cite{Ma} 
the spaces $\tilde \D_0$ and $\D_1$ (see Section  5.2)
can be used 
to characterize the marginal distributions 
of the Ornstein-Uhlenbeck process  as 
solutions to Fokker-Planck-Kolmogorov equations for measures. 
 Note that \cite{Ma} deals with   stochastically 
continuous  Markov semigroups $(R_t)$ acting on $UC_b(H)$ (i.e., $R_t : UC_b(H) \to UC_b(H)$, $t \ge 0$). However all the  results in \cite{Ma} (see in particular Theorems 1.2, 1.3 and 1.4) can be easily proved for stochastically 
continuous  Markov semigroups  acting on $C_b(H)$ with the same proofs.


To state the main result
 let  $M(H )$ be the Banach space of all finite signed Borel measures on $H$ 
endowed with the total variation norm $\|  \cdot \|_{TV}$.


 \begin{definition}  
Let $\D$ be $\tilde \D_0$  or $\D_1$ and ${\cal L}_0$ be the operator defined in \eqref{fuy}. 
Given  $\mu$ $\in   M(H )$,  a family of 
 measures $(\gamma_t )_{t \ge 0} \subset M(H)$ is called a \textit{solution to the 
measure equation}
\begin{equation}
\label{ma1}
\begin{cases}
\frac{d}{dt} \int_H f(x) \gamma_t(dx) = 
\int_H {\cal L}_0 f(x) \gamma_t(dx), \;\;\; f \in { \D}, \; t \ge 0,
\\
 \gamma_0 = \mu
\end{cases}
\end{equation} 
if we have:

\smallskip


  (i) 
 for any 
$T>0$, the real map: $t \mapsto   \|  \gamma_t\|_{TV}$ 
 belongs to $L^1(0,T)$;

(ii) for any $f \in { \D}$, the real function: $t \mapsto \int_{H} f(x) 
\gamma_t 
(dx)$ is absolutely continuous on each $[0,T]$, $T>0,$ and moreover
$$
\int_H f(x) \gamma_t(dx) - \int_H f(x) \mu(dx) = 
\int_0^t \Big (\int_H {\cal L}_0 f(x) \gamma_s(dx) \Big) ds,
\;\; t \ge 0. \qed
$$
\end{definition}
To study
 \eqref{ma1} 
one  associates by duality to  
the OU semigroup $(P_t)$ another semigroup  
 $(P_t^*)$ (see \cite[Section 3]{Ma});   $P_t^* : M(H)
 \to M(H)$, $t \ge 0$, and,
for any $\mu \in M(H)$,
$$
P_t^* \mu (B)  = \int_H P_t  (\mathds{1}_B)(x) \mu (dx), 
$$
 for  Borel set  $B \subset H$.

\begin{theorem} \label{cio} Let $\D$ be $\tilde \D_0$ or $\D_1$ (see Section 5.2). Then  for any $\mu \in  M(H )$ there exists a 
unique solution $(\gamma_t)_{t \ge 0}$ $ \subset M(H )$ to equation \eqref{ma1}
 Moreover, 
such solution is given by $(P_t^* \mu)_{t \ge 0}$.
\end{theorem}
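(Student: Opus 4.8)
The plan is to establish the two halves separately: first that $(\gamma_t)_{t\ge 0} = (P_t^* \mu)_{t \ge 0}$ is indeed a solution to the measure equation \eqref{ma1}, and then that any solution must coincide with it. For existence, I would start from the key formula \eqref{f55} (valid by Proposition \ref{cor111} and Proposition \ref{e33} for $f \in \tilde \D_0$, hence also for $f \in \D_1 \subset \tilde \D_0$), namely $P_t f(x) = f(x) + \int_0^t P_r (\LO_0 f)(x)\, \mbox{d}r$. Integrating this against $\mu$ and using the definition of $P_t^*$ together with the Fubini theorem (justified since $\LO_0 f \in C_b(H)$, so the integrand is bounded and $t \mapsto \|P_r^*\mu\|_{TV} \le \|\mu\|_{TV}$ is integrable on each $[0,T]$), I would obtain
\begin{equation*}
\int_H f\, d(P_t^*\mu) - \int_H f\, d\mu = \int_0^t \Big( \int_H \LO_0 f\, d(P_r^*\mu)\Big)\mbox{d}r,
\end{equation*}
which is exactly condition (ii). Condition (i) is immediate from the contraction property $\|P_t^*\mu\|_{TV}\le \|\mu\|_{TV}$, and $\gamma_0 = P_0^*\mu = \mu$. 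This settles existence for both choices $\D = \tilde \D_0$ and $\D = \D_1$ with the same argument.

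For uniqueness I would follow the duality scheme of \cite[Section 3]{Ma}. Let $(\gamma_t)$ be any solution; the goal is to show $\int_H g\, d\gamma_t = \int_H g\, d(P_t^*\mu)$ for all $g \in C_b(H)$, equivalently $\int_H P_t g\, d\mu = \int_H g\, d\gamma_t$ for a $\pi$-dense family of $g$, since $\pi$-dense families determine finite measures on $H$. The standard device is, for fixed $T>0$, to test the measure equation against the time-dependent function $s \mapsto P_{T-s} f(x)$ for $f$ in the core $\D$, and to differentiate $s \mapsto \int_H P_{T-s} f\, d\gamma_s$. Because $\D$ is invariant for $(P_t)$ (Proposition \ref{cor111}(ii), Proposition \ref{e33}(ii)) we have $P_{T-s} f \in \D$, so $\LO_0$ applies to it and $\LO_0 P_{T-s} f = P_{T-s}\LO_0 f = \partial_s(-P_{T-s}f)$ by the commutation identity $\LO_0 P_t f = P_t \LO_0 f$ established inside the proofs of those propositions. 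Formally the two contributions to the derivative cancel, giving $\frac{d}{ds}\int_H P_{T-s} f\, d\gamma_s = 0$, whence $\int_H f\, d\gamma_T = \int_H P_T f\, d\gamma_0 = \int_H P_T f\, d\mu = \int_H f\, d(P_T^*\mu)$. Since $\D$ is $\pi$-dense in $C_b(H)$, this forces $\gamma_T = P_T^*\mu$.

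The main obstacle will be making the differentiation-in-$s$ rigorous: one must show that $s \mapsto \int_H P_{T-s}f\, d\gamma_s$ is absolutely continuous and compute its derivative as a sum of the $\partial_s$-term and the term coming from the measure equation applied to the (now $s$-dependent) test function $P_{T-s}f \in \D$. This requires controlling the regularity of $s \mapsto P_{T-s}f$ as a $\pi$-convergent family and invoking the absolute-continuity hypothesis (ii) in the definition of solution, together with the $L^1$-bound (i) on $\|\gamma_s\|_{TV}$, so that one may pass difference quotients under the integral sign and apply a dominated-convergence argument. This is precisely the point handled in \cite[Theorems 1.2--1.4]{Ma}, and I would invoke those results, noting as the excerpt does that Ma's proofs for stochastically continuous Markov semigroups on $UC_b(H)$ carry over verbatim to $C_b(H)$; the only thing to verify is that our OU semigroup meets Ma's hypotheses, which follows from the $\pi$-Feller properties recorded in Section 5.1 and from $\D$ being an invariant $\pi$-core as proved in Corollary \ref{end}.
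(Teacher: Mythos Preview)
Your proposal is correct and follows essentially the same route as the paper: both reduce the theorem to the fact that $\D$ is an invariant $\pi$-core for $\cal L$ (Corollary \ref{end}) together with the general result of \cite{Ma} for stochastically continuous Markov semigroups. The paper simply cites \cite[Theorem 1.5]{Ma} (and \cite[Remark 5.1]{Ma}) directly, whereas you spell out the existence half via \eqref{f55} and sketch the duality argument for uniqueness before invoking \cite[Theorems 1.2--1.4]{Ma}; the content is the same.
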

\begin{proof}
The assertion  follows from   \cite[Theorem 1.5]{Ma} (see also 
\cite[Remark 5.1]{Ma})  using the fact that both  ${\tilde \D_0}$ and $\D_1$ are  $\pi$-cores for the OU generator ${\cal L}$ (see Corollary \ref{end}).  
%
\end{proof}

\section*{Appendix }

\noindent \textbf{Sketch of the proof of Lemma \ref{Plemma}}. Let $f \in C^2_b (\R^d)$. We proceed in some steps.

\vv \emph{I Step}. We consider a $C^{\infty}$-function $\rho : \R^d \to \R$  such that, for all $x\in \R^d$, $0\leq \rho(x) \leq 1$, 
 $\rho(x)= 1$ for $|x| \le 1$ and $\rho(x) =0 $ for $|x| \ge 3/2$.
We  define a standard sequence of mollifiers  $(\rho_n)_{}$ setting
$
\rho_n(x)=\frac{1}{c}\rho(nx)n^d,
$
where $c=\int_{\R^d} \rho(x)\mbox{d}x$. Therefore, $\int_{\R^d}\rho_n (x) \mbox{d}x =1$, $n \ge 1$.
 
We introduce  a sequence $(\tilde{f}_n)_{}$ as
$
\tilde{f}_n(x)=(f\ast \rho_n)(x)=\int_{\R^d} f(y)\rho_n(x-y)\mbox{d}y$, $x \in \R^d$.  
Differentiating under the integral sign, one proves easily that each $\tilde{f}_n$ belongs to $C^{\infty}_b(\R^d)$ (i.e., each  $\tilde{f}_n$ is bounded  
 and has  bounded derivatives of all orders). It is straightforward to check that, for any compact $K \subset \R^d$,
\begin{gather*}
 \tilde{f}_{n} \to  f, \;\; 
 D\tilde{f}_{n} \to  Df, \;\; 
 D^2\tilde{f}_{n} \to D^2f,
\end{gather*}
  uniformly on $K$ as $n \to \infty$. Moreover, for any $n \ge 1$,
\begin{equation}
\label{bouu}
 \| \tilde f_{n} \|_{0} + \| D \tilde f_{n} \|_{0} +
  \| D^2 \tilde f_{n} \|_{0} \le  \| f \|_{0} + \| D f \|_{0} +
  \| D^2f \|_{0}.
\end{equation} 

\noindent \emph{II Step.} We define $f_n^* : \R^d \to \R$, $n \ge 1$,
\[ f_n^* (x)=\tilde{f}_n(x)\rho\left(\frac{x}{n}\right), \;\; x 
\in \R^d;
\]
 each $f_n^* $ belongs to  $C^{\infty}_{b}(\R^d)$ with compact support in the $d$-dimensional cube $(-2n,2n)^d$. By \eqref{bouu} we obtain  that $(f_n^*)$, $(Df_n^*)$ and $(D^2 f_n^*)$ are uniformly bounded on $\R^d$, i.e.,
$
\sup_{n \ge 1} \, (\| f_{n}^* \|_{0} + \| D f_{n}^* \|_{0} +
  \| D^2f_{n}^* \|_{0}) < \infty.
$
In addition, as $n \rightarrow \infty$, we have $f_n^* \rightarrow f$, $Df_n^* \rightarrow Df$, $D^2f_n^* \rightarrow D^2f$ pointwise on $\R^d$.

\vv \emph{ III Step.} We define suitable extensions $(f_n)_{}$ of the functions $(f_n^*)_{}$ when these are restricted to the domain $[-2n,2n]^d$.  Let $n \ge 1$. We extend $f_n^*$ from $[-2n,2n]^d$ to $\R^d$ by   
periodicity of period $4n$ in all its variables, i.e., 
\[ 
f_n (x + 4n h  ) = f_n^*(x), \;\; x\in [-2n,2n]^d, \;\; h = (h_1, \ldots, h_d) \in {\mathbb Z}^d.
\]
 Clearly, $  \sup_{n \ge 1} \, (\| f_{n} \|_{0} $ $ + \| D f_{n} \|_{0}$ $ +  \| D^2f_{n} \|_{0}) < \infty$ and $(f_n) \subset C^{\infty}_b(\R^d)$.
 Since, for all $n \ge 1$, $f_n$ has  period $4n$ in each of its variables, it can be represented by Fourier series of the kind
\begin {equation} \label{tori} f_n(x) = \sum_{h\in\Z^d} c_h^{(n)} e^{\frac{i\pi}{2n} \langle x , h\rangle},  \;\; x \in \R^d,
\end {equation}
with 
$ c_h^{(n)} = \frac{1}{(4n)^d} \int_{[-2n,2n]^d} f_n(x)e^{-\frac{i\pi}{2n} \langle x , h\rangle} \mbox{d}x$ and 
 $\langle x, h\rangle$ $= x_1 h_1 + \ldots + x_d h_d$.
It is a standard result  that the series is uniformly  convergent on $\R^d$ (see \cite[Chapter VII]{SW}).    
We introduce, for  $n , \, m \ge 1,$
\[ f_{nm}(x)= \sum_{h\in\Z^d , \,  | h | \leq m}^{} c_h^{(n)} e^{\frac{i\pi}{2n} \langle x , h\rangle} \in V(\R^d).
\]
Differentiating under the summation in \eqref{tori}  (using the regularity properties of $f_n$) we get  that, for any $n \ge 1$,
$ \lim_{m\rightarrow\infty}  f_{nm}= f_n $, $\lim_{m\rightarrow\infty}  Df_{nm}= Df_n $ and $\lim_{m\rightarrow\infty} D^2f_{nm}= D^2f_n $, uniformly on $\R^d$. This gives  the first assertion of Lemma \ref{Plemma}. 

\noindent \emph{IV Step.} Let $x_0\in\R^d$. there exists $n_0 \in \N$ such that $|x_0| < n_0$; therefore, by Step III, $f_n  (x_0) = f_n^* (x_0)$, $n \ge n_0$. This implies by Step II that 
 $f_n (x_0) \rightarrow f(x_0)$ as $n \rightarrow \infty$.  The same happens for the derivatives, that is
$Df_n (x_0)  \rightarrow Df(x_0),$
 $D^2f_n (x_0) \rightarrow D^2f(x_0),$ 
as $n \to \infty$. We have shown that $(f_n)$ and $(f_{nm})$ verify all the  assertions.
\qed







\end{document}